\title{A Reduced Form for Linear Differential Systems and its Application to Integrability of Hamiltonian Systems }
\author{Ainhoa Aparicio-Monforte\footnote{RISC, Johannes Kepler University. Altenberger Strasse 69 A-4040 Linz, Austria.} \thanks{First author supported both by a FEDER doctoral grant of  R\'egion Limousin and by the Austrian FWF grant Y464-N18.} 
\& Jacques-Arthur Weil\footnote{XLIM (CNRS \& Universit\'e de Limoges) - 123, avenue Albert Thomas - 87060 Limoges Cedex}
}
\newtheorem{thm}{Theorem}
\newtheorem{rem}[]{Remark}
\newtheorem{defn}[thm]{Definition}
\newtheorem{prop}[thm]{Proposition}
\newtheorem{cor}[thm]{Corollary}
\newtheorem{exmp}[thm]{Example}
\newtheorem{lem}[thm]{Lemma}
\begin{document}
\maketitle
\begin{abstract}
Let $k$ be a differential field with algebraic closure $\bar{k}$, and let  $[A]:\; Y'=AY$ with  $A\in \mathcal{M}_n (k)$ be a  linear differential system. Denote by $\mathfrak{g}$  the Lie algebra of the differential Galois group of $[A]$.
We say that a matrix $R\in \mathcal{ M}_{n}(\overline{k})$ is  a \emph{reduced form} of $[A]$ if $R\in \mathfrak{g}(\overline{k})$ and
there exists $P\in GL_n (\overline{k})$ such that $R=P^{-1}(AP-P')\in  \mathfrak{g}(\overline{k})$.
Such a form is often the sparsest possible attainable through gauge transformations without introducing new transcendents.
In this article, we discuss how to compute reduced forms of some symplectic differential systems, arising as variational equations of hamiltonian systems. We use this to give an effective form of the Morales-Ramis theorem on (non-)-integrability of Hamiltonian systems.
\end{abstract}

\section{Introduction}

This article lies at the crossroads of
differential Galois theory and the complete (Liouville) integrability of Hamiltonian systems.
Let  $[A]:\; Y'=AY$ with  $A\in \mathcal{M}_n (k)$ be a  linear differential system,
where $k$ denotes a differential field of characteristic zero with algebraically closed constant field $C$.

On the differential Galois theory side, we propose (following works of Kolchin, Kovacic, Singer, Mitschi and others)  a notion of reduced form for the system $[A]$. Let $G$ denote the differential Galois group of $Y'=AY$ and $\mathfrak{g}$ its Lie algebra.
We say that a matrix $R\in {\cal M}_{n}(\overline{k})$ is  a {\em reduced form} of $[A]$ if $R\in \mathfrak{g}(\overline{k})$ and
there exists $P\in GL_n (\overline{k})$ such that $R=P^{-1}(AP-P')$. 
Such a form turns out to be very natural and somehow the most concise attainable through gauge transformations without introducing new transcendents.
When a system is in reduced form,  many of its intrinsic properties can be readily computed. In our case, we are mainly  concerned with detecting the (non-)abelianity  of the Lie algebra of the differential Galois group of $[A]$.
Stemming from works by Kovacic and Kolchin (\cite{Ko99a} or  \cite{Ko69a,Ko71a})  on the inverse problem of differential Galois theory, this notion of \textit{reduced form} has been since developed by 
	\cite{MiSi96a,MiSi02a,CoMiSi05a} (for inverse problems
as well) and many others  (e.g \cite{Ha05b,JuLe07a}, see more references in \cite{Si09a,PuSi03a}, generally for inverse problems). We explore here both some of the advantages and the constructibility of this reduced form applied to the context of direct problems in differential Galois theory.

On the Hamiltonian system side, the Morales-Ramis theorem states that if a Hamiltonian system is meromorphically completely (Liouville) integrable, then the Lie algebra of its variational equation (a linear differential system) along a non-constant integral curve is abelian.
Applying the above notion of reduced form (effectively), we are able to decide whether this Lie algebra is abelian or not, and thus propose new (non-)integrability criteria.
When the Lie algebra is abelian, our reduced form is useful to simplify the study of higher variational equations in order to apply  the Morales-Ramis-Sim\'o theorem (this is explored in \cite{AW11a,Ap10a}).

Many of the existing examples are Hamiltonian systems with two degrees of freedom. In such case, we will show (or recall to specialists) how the normal variational equation can be put into a reduced form using Kovacic's algorithm. In the literature, many authors take into consideration only the normal variational equation $[N]$. Since the Lie algebra $\mathfrak{g}_N$ of $[N]$  is only a quotient of $\mathfrak{g}$,  the data thus obtained cannot be complete, especially as far as the abelianity  of $\mathfrak{g}$ is concerned.

In this paper, we give an algorithm which, by looking for a reduced form of a linear differential system, will either show that the full system has non-abelian Lie algebra (hence proving non-integrability of the original differential system) or (in the abelian case) return a reduced form for the variational equation.

Our construction is quite systematic and should generalize to many other systems (such generalizations are initiated in \cite{AW11a}). It also shows the lovely simple structure of reduced forms. This will be very useful in studying higher variational equations.
Systems occurring in higher variational equations are reducible and of big sizes; computing their Galois groups is difficult. Although a general decision procedure exists (\cite{CoSi99a} in completely reducible cases, \cite{Hr02a} in general), studies towards extensive descriptions of differential Galois groups in reducible cases (e.g \cite{Be01a, Ha05a}) are still in progress.
The case of two completely reducible factors (\cite{BeSi99a,Be02a}) is the only one which seems to be fully understood.
We propose an approach which focuses on finding abelianity criteria for the Lie algebra instead of computing the Galois group.

This paper is split into four sections apart from this introduction. Section 2 deals with theoretical background material such as differential Galois theory and integrability of Hamiltonian systems. Section 3 introduces the concept of reduced form as well as its importance and usefulness, showing that the Kovacic algorithm can be re-used to put second order systems into a reduced form. In Section 4, we consider linear differential systems whose matrix  $A$ lies in $\mathfrak{sp}(4,k)$ (typically the variational equation of a Hamiltonian system with two degrees of freedom) and provide a reduction algorithm together with an abelianity criterion.
In section 5, we show how to apply our techniques to questions of integrability of Hamiltonian systems. In particular, our method allows us to reprove that the lunar Hill system  is not meromorphically integrable.\\
\noindent{\bf Acknowledgements}: we warmly thank S. Simon for fruitful conversations and for 
suggesting the study of the Hill example which is given in section 5.
We also thank Thomas Cluzeau, Elie Compoint, Maria Przybylska, Andrzej Maciejewski and particularly the referees for their useful suggestions.
\section{Background material}\label{background}
This section contains necessary background material and no new results.
\subsection{Some differential Galois theory}
General references for this section are \cite{PuSi03a,Si09a} and many others, cited therein.
Let $(\,k\,,\, '\,)$ be a differential field with an algebraically closed field of constants $C$ of characteristic zero.
Let $[A]:\; Y'=AY$ denote a linear  differential system with $A\in M_n (k)$.
A Picard-Vessiot field $K$ is a minimal differential extension of $k$ generated by the entries of a
fundamental solution matrix $U$ of $[A]$. The {\em differential Galois group} of $A$ is  the group of the automorphisms over $K$ that leave $k$ invariant and that commute with the derivation:
$$
G:=\partial{}Aut (K/k)=\left\lbrace\sigma\,:\, K\mapsto K\,:\, \begin{array}{c}\forall u\in k, \sigma(u) = u \text{ and } \\
\forall u\in K, \;  \sigma(u')=\sigma(u)' \end{array}\right\rbrace 
$$
The Galois group $G$ is a linear algebraic group acting on the vector space of solutions of $[A]$ and as such admits a faithful representation in $GL_n(C)$ once a fundamental solution matrix $U$ is chosen. 
The {\em Lie algebra of $G$} is
$\mathfrak{g}:=Lie(G)=T_{e}G$, a $C$-vector space of matrices endowed with a Lie algebra structure by the
usual Lie bracket $[A,B]:=AB-BA$. The Lie algebra $\mathfrak{g}$ is abelian if and only if the connected component of the identity $G^\circ$ is abelian as well.

 We say that two systems $Y'=AY$ and $Z'=BZ$ with $A,\, B\in M_n (k)$ are {\em gauge equivalent} if there is a linear change of variable $Y=PZ$ with $P\in GL_n (k)$ such that $Z' =BZ$, i.e:
   $$B=P[A]:= P^{-1}(AP-P') \quad\hbox{ (gauge transformation)}.
   $$
    Two gauge equivalent systems share the same Galois group $G$. Moreover,   $V:=P^{-1}U$ is a fundamental solution matrix of $[B]$ ; 
    given $\sigma\in G$ with matrix $M_\sigma$, we have $\sigma(U)=UM_\sigma$ and $\sigma(P^{-1})=P^{-1}$ so $\sigma(V)=P^{-1}UM_\sigma=VM_\sigma$, i.e the representation of $G$ is unchanged on this new system.
     If the coefficients of the gauge transformation belong to the algebraic closure
     $\overline{k}$,  then $G$ is altered whereas $\mathfrak{g}$ and $G^\circ$ remain unchanged.

\subsection{Hamiltonian Systems}
Let $(M,\omega)$ be a complex analytic symplectic manifold of complex dimension $2n$.
By the Darboux theorem, we know that, locally, $M$ is isomorphic to an open domain $U\subset\mathbb{C}^{2n}$ and that, taking locally suitable coordinates $(q,p)=(q_1,\ldots,q_n,p_1,\ldots,p_n)$, we can 
deal with associated entities (such as HamiltonÕs equations and the Poisson
bracket) in terms of the matrix
$$
J:=\left[\begin{array}{cc}0_n & I_n\\ -I_n & 0_n\end{array}\right].
$$
In these coordinates, given a function $H\in C^{2}(U)\,:\,U\rightarrow \mathbb{C}$,
we define a Hamiltonian system over $U\subset\mathbb{C}^{2n}$ as the differential system given by the vector field
$X_H = J\cdot\nabla H $:
\begin{equation}\label{Sham}
\dot{q}_i =\frac{\partial H}{\partial p_i}(q,p)\quad\hbox{,}
\quad\dot{p}_i =-\frac{\partial H}{\partial q_i}(q,p)
\quad\hbox{for}\quad i=1\ldots n.
\end{equation}
Let  $z(t)$ be a parametrization of an integral curve
$\Gamma\subset U$ that satisfies (\ref{Sham}).
The Hamiltonian $H$ is constant over those integral curves. Indeed,
$$X_H\cdot H := \langle \nabla H \,,\, X_H\rangle = \langle \nabla H \,,\, J\nabla H\rangle=0.$$
Therefore integral curves will lie on the levels of energy of $H$.
A function $F : U\longrightarrow\mathbb{C}$, meromorphic over $U$, is called a \textit{meromorphic first integral} of
 (\ref{Sham}) if $X_H \cdot F = 0$ (i.e. $F$ is constant over the integral curves of $H$). 
 The Poisson bracket $\left\lbrace\,,\,\right\rbrace$ of two meromorphic functions $f,g$ defined over
 a symplectic manifold 
 is defined as
$
\left\lbrace f,g\right\rbrace :=\langle X_f, \nabla g\rangle =\langle -X_g ,\nabla f \rangle   $
  and, in coordinates,
\begin{equation}
\left\lbrace f,g\right\rbrace = \sum^{n}_{i=1}\frac{\partial f}{\partial q_i}\frac{\partial g}{\partial p_i}-\frac{\partial f}{\partial p_i}\frac{\partial g}{\partial q_i}.
\end{equation}
The Poisson bracket endows  the set of first integrals of (\ref{Sham}) with a Lie algebra structure~;
in fact, a function $F$ is a first integral of (\ref{Sham})  if and only if it is in involution with $H$, i.e. $\left\lbrace F,H\right\rbrace =0$.

A Hamiltonian system is called {\em meromorphically Liouville  integrable} if it possesses $n$  first integrals $H_1 = H,\ldots,H_n$ meromorphic over $U$ satisfying:
\begin{itemize}
\item[-] they are functionally independent: $\nabla H_1,\ldots,\nabla H_n$ are linearly independent over $U$,
\item[-] they are in involution: $\left\lbrace H_i\,,\, H_j\right\rbrace = 0$ for $i,j=1\ldots n$.
\end{itemize}

The theory of Morales and Ramis, developing on founding works of Ziglin and followers (\cite{Zi82a,Zi83a}, \cite{It85a}, 
\cite{Yo86a,Yo87a,Yo87b,Yo88a}
\cite{BaChRoSi96a,ChRoSi95a}, \cite{Mo99a}),
aims at proving rigorously non-integrability using differential Galois groups (or monodromy groups) of variational equations.

\subsubsection{Variational equations}
Let $\Gamma\subset U$ be an integral curve of (\ref{Sham})
	parametrized by $z(t)$.
The differential field $k:=\mathbb{C}\langle z(t)\rangle$ will be called the \emph{coefficient field} (or, informally, the \emph{field of rational functions}).
We define the \textit{variational equation} of (\ref{Sham}) along $\Gamma$ as the linearization of (\ref{Sham}) along $z(t)$. It describes the behavior of the solutions of (\ref{Sham}) near $z(t)$.
In other words, if $z_{0}(t)$ and $z_1(t):=z_{0}(t)+Y(t)$ where $Y(t)$ is an infinitesimal perturbation of $z_0(t)$, then
a first order approximation gives
	$Y'=d_{z_0 (t)}X_H Y(t)$.
Taking coordinates, the variational equation along $z(t)$ is  given by
\begin{equation}\label{VE}
Y'=AY \quad \hbox{\rm with }\quad A:=J\cdot \mathrm{Hess}(H)(z(t)).
\end{equation}
As the Hessian $\mathrm{Hess}(H)(z(t))$ is a symmetric matrix,  we have
$A\in\mathfrak{sp}(2n,k)$ (i.e $A^{T}\cdot J + J \cdot A = 0$). Thus, as recalled in the Appendix,  there exists a fundamental matrix of solutions 
$U$ of  $Y' = AY$  in a Picard-Vessiot field $K$ such that   $U\in \mathrm{Sp}(2n, K)$. 
In the sequel, we will work with such a symplectic  fundamental solution matrix $U(t)\in \mathrm{Sp}(2n , K)$.

We denote by $G$ the differential Galois group of the variational equation (\ref{VE}) and by $\mathfrak{g}$ its Lie algebra.
As system (\ref{VE}) is Hamiltonian ($ A \in\mathfrak{sp}(2n, k)$),  $G$ is a subgroup of  $\mathrm{Sp}(2n,\mathbb{C})$
and $\mathfrak{g}\subset\mathfrak{sp}(2n,\mathbb{C})$ (see next section). 

\begin{thm}[Morales and Ramis, see \cite{Mo99a} Theorem 4.2 p.81]\label{MR}
Let $z(t)$ be a non-singular integral curve of  the Hamiltonian system (\ref{Sham}). Let  (\ref{VE})  be its variational equation along $z(t)$.
If (\ref{Sham}) is  meromorphically Liouville integrable then the Lie algebra $\mathfrak{g}$ of (\ref{VE})  is abelian.
\end{thm}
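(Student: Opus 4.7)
The plan is to lift the first integrals $H_1 = H, \ldots, H_n$ of the Hamiltonian system to linear first integrals of the variational equation (\ref{VE}) and then show that their $G$-invariance forces $G^\circ$ to fix a Lagrangian subspace pointwise, which in turn forces $G^\circ$ (hence $\mathfrak g$) to be abelian.

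First I would attach to each $H_i$ the linear form $f_i(Y) := \nabla H_i(z(t))^\top Y$ on the solution space of (\ref{VE}). A direct computation based on the symmetry of $\mathrm{Hess}(H_i)$, the identity $A = J\cdot \mathrm{Hess}(H)$, and differentiating the globally valid relation $\{H_i,H\}=0$ in an arbitrary direction $Y$ shows that $(f_i\circ Y)' = 0$ for every solution $Y$ of (\ref{VE}). Picking a symplectic fundamental matrix $U\in \mathrm{Sp}(2n,K)$, it follows that the row vectors $c_i^\top := \nabla H_i(z(t))^\top U$ are constant, i.e.\ $c_i \in \mathbb{C}^{2n}$.

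Next, for any $\sigma\in G$ with representing matrix $M_\sigma$, applying $\sigma$ to $\nabla H_i(z(t))^\top U = c_i^\top$ gives $c_i^\top M_\sigma = c_i^\top$, equivalently $M_\sigma^\top c_i = c_i$. The symplecticity of $U$, which gives $U J^{-1} U^\top = J^{-1}$, yields $c_i^\top J^{-1} c_j = \nabla H_i(z(t))^\top J^{-1}\nabla H_j(z(t)) = -\{H_i,H_j\}(z(t)) = 0$, so the vectors $w_i := J^{-1} c_i$ form an isotropic family. The non-singularity of $\Gamma$ combined with functional independence of the $H_i$ forces the $\nabla H_i(z(t))$ to be linearly independent, hence so are the $c_i$ and the $w_i$; their span $L \subset \mathbb{C}^{2n}$ is therefore a Lagrangian subspace. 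Rewriting $M_\sigma^\top c_i = c_i$ via the symplectic identity $M_\sigma^\top = J M_\sigma^{-1} J^{-1}$ shows that every $M_\sigma$ fixes $L$ pointwise. A short piece of symplectic linear algebra then identifies the pointwise stabilizer of a Lagrangian in $\mathrm{Sp}(2n,\mathbb C)$: in a Darboux basis adapted to $L$ these stabilizers are exactly the matrices $\begin{pmatrix} I & B\\ 0 & I\end{pmatrix}$ with $B = B^\top$, and these form an abelian unipotent group. Hence $G^\circ$ embeds in an abelian group, and $\mathfrak g = \mathrm{Lie}(G^\circ)$ is abelian.

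The step I expect to be most delicate is the first: the vanishing of $(f_i\circ Y)'$ rests on linearizing the global identity $\{H_i,H\}=0$ at a point of $\Gamma$ in an arbitrary infinitesimal direction $Y$, and one must cleanly separate the fact that $z(t)$ solves the Hamiltonian equations from the requirement that $Y$ solves (\ref{VE}). A subsidiary subtlety is the passage from ``functional independence on $U$'' to ``linear independence of the gradients along $\Gamma$'': the non-singularity hypothesis on $\Gamma$ is precisely what avoids the exceptional locus where $\nabla H_1,\ldots,\nabla H_n$ become dependent, and this is the only place where that hypothesis is used.
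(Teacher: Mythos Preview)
The paper does not prove Theorem~\ref{MR}: it is quoted as background material from \cite{Mo99a} (Theorem 4.2, p.~81) and no argument is supplied in the text. So there is no ``paper's own proof'' to compare against.

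That said, your sketch is essentially the classical argument (the one in \cite{Mo99a}, itself building on Ziglin and on \cite{BaChRoSi96a,ChRoSi95a}) and is correct in outline. The key steps --- linearizing the identity $\{H_i,H\}=0$ to obtain constant covectors $c_i^\top=\nabla H_i(z(t))^\top U$, their $G$-invariance, the isotropy computation via $UJ^{-1}U^\top=J^{-1}$, and the identification of the pointwise stabilizer of a Lagrangian with the abelian group $\left\{\begin{pmatrix}I&B\\0&I\end{pmatrix}: B=B^\top\right\}$ --- are all sound.

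One sharpening is worth noting on the independence step. Since the $c_i$ are constant vectors, it suffices that the gradients $\nabla H_i(z(t))$ be linearly independent at a \emph{single} point of $\Gamma$; you do not need independence along the whole curve. The hypothesis of functional independence of the $H_i$ guarantees that the set where $\nabla H_1,\ldots,\nabla H_n$ become dependent is a proper analytic subset, so a non-constant curve $\Gamma$ cannot lie entirely inside it unless it is contained in that exceptional locus. This is the genuine content of the ``non-singular'' hypothesis, and you have correctly flagged it as the place where care is required.
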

This theorem is in fact a non-integrability criterion: when checking whether $\mathfrak{g}$ is abelian only negative answers are conclusive.
Indeed, there are non-integrable systems for which the Lie algebra (of the differential Galois group) of their variational equation along a given integral curve is abelian.
In this case, one is bound to consider higher order variational equations, see e.g \cite{MoRaSi07a} section 3.4 or \cite{AW11a,Ap10a}.

\subsubsection{The Normal variational equation}\label{222}

In general, studying the abelianity of $\mathfrak{g}$  is simpler than finding either a complete set of meromorphic first integrals of (\ref{Sham}) or an obstruction to their existence without taking into account (\ref{VE}). Still, checking straightforwardly the abelianity of $\mathfrak{g}$  is not an easy matter.

 However, we can take advantage of the fact that $z'(t)$ is a particular solution of the variational equation (\ref{VE}) along $z(t)$. Indeed,  Proposition 4.2 p 76 in \cite{Mo99a} (see also our Appendix) ensures the existence of a symplectic  gauge transformation  that allows us to reduce this variational equation (i.e. to rule out one degree of freedom) and to obtain the {\em normal variational equation} $\mathrm{(NVE)}$.  
 In the new coordinates, $\mathrm{(NVE)}$ can be written as $Z'=NZ$ where $N\in\mathfrak{sp}(2(n-1) ; k)$:   (NVE) is therefore yet another (Hamiltonian) linear differential system.

 Consider now $U_N\in \mathrm{Sp}(2(n-1), K_N)$  a fundamental matrix of solutions of $Z'=NZ$,  where $K_N\supset k$ is a Picard Vessiot extension for (NVE). We have $k\subset K_N \subset K$ so $\tilde{G}:=\partial Aut_{K_N}(K)\vartriangleleft G:=\partial Aut_{k}(K)$.
Since the differential Galois group of the (NVE) is given by $G_N:=\partial Aut_{k}(K_N) \simeq
G/\tilde{G}$
then the Lie Algebra of $G_N$ is  $\mathfrak{g}_N = \mathfrak{g}/\tilde{\mathfrak{g}}$. Thus, if  $\mathfrak{g}_N$ is non-abelian then $\mathfrak{g}$ is not abelian either. In fact, the usual method, ever since Morales introduced it, consists in reducing the (\ref{VE}) to its (NVE) and then argue about the abelianity of $\mathfrak{g}_N$. If $\mathfrak{g}_N$ turns out to be abelian, no conclusion for (\ref{VE}) is reached and so the higher order variational equations are explored. We will improve this procedure in sections 3 and 4.
\\
Let us illustrate the notion of  normal variational equation by taking a general example for $2n=4$. In such a case, the variational equation (\ref{VE}) can be written:
\begin{equation}\label{VEn=2}
Y'= {A}Y\quad \hbox{with}\quad A:=
{\small\left[\begin{array}{cccc}\alpha_{11}&\alpha_{12}&\alpha_{13}&\alpha_{14}\\ \alpha_{21}&\alpha_{22}&\alpha_{14}&\alpha_{24}\\ \alpha_{31}&\alpha_{32}&-\alpha_{11}&-\alpha_{21}\\\alpha_{32}&\alpha_{42}&-\alpha_{12}&
-a_{22}\end{array}\right] }\in\mathfrak{sp}(4, k).
\end{equation}
	Take a particular solution $z=(z_1,z_2,z_3,z_4)^T$ of (\ref{Sham}) (for $n=2$).
Then $z'=(z'_1 ,z'_2,z'_3,z'_4)^T$ is a particular solution of (\ref{VEn=2}). Completing $z'$ to a symplectic basis  of $k^4$ (see Appendix) yields
the symplectic change of variable $Z= {P}U $, where
\begin{equation}\label{usualgauge}
{\small
 {P}:=\left[\begin{array}{cccc}z'_1 & 0 & 0 & 0 \\
							 z'_2 & 1 & 0 & 0 \\
							 z'_3 & \frac{z'_4}{z'_1}& \frac{1}{z'_1}&-\frac{z'_2}{z'_1}\\
							 z'_4 & 0 & 0 & 1	\end{array}\right]}\in Sp(4,k).
\end{equation}
It induces a
(symplectic) gauge transformation $A_N:= {P}[ {A}]= {P}^{-1}({A} {P}- {P}')\in\mathfrak{sp}(4;k)$, where

\begin{equation}\label{A_N}
A_N:=\left[\begin{array}{cccc}0 &a_{12,n} & a_{13,n} & a_{14,n} \\ 0 & n_{11} & a_{14,n} &n_{12}\\ 0 & 0 & 0 & 0\\ 0 & n_{21}& -a_{12,n}& -n_{11} \end{array}\right].
\end{equation}

The \emph{algebraic normal variational} (or for convenience \emph{normal variational equation}, see Remark \ref{NVE}) \emph{equation} is given by $\dot{U} = N U$ where $$N:=\left[\begin{array}{cc}n_{11} & n_{12} \\ n_{21} & -n_{11}\end{array}\right].$$

\section{Reduced form of a linear differential system}
\subsection{A Kovacic reduced form}
The study of the integrability of dynamical systems has originated numerous deep studies on local normal forms near equilibrium points (e.g \cite{Bi66a} and references therein) or even along periodic solutions (e. g. \cite{Ko96a}). However, we do not know of a similar global notion properly defined along non-constant  regular solutions.

In this work, we propose to explore a weaker global notion, a notion of {\em  reduced form } of a linear differential system which is strongly inspired by the work of Kovacic and Kolchin 
	(\cite{Ko69a,Ko71a}, \cite{Ko99a}) and more recent works like \cite{MiSi96a,MiSi02a} on the inverse problem in differential Galois theory.

\begin{defn}
 Let $A$ belong to $ M_{n}(\bar{k})$. Let $G$ be the differential Galois group of $\dot{Y} = AY$ and $\mathfrak{g}$ its Lie algebra. We say that $A$ is in {\em reduced form} if $A\in\mathfrak{g}(\bar{k})$.
\end{defn}

The expression  $A\in\mathfrak{g}(k)$  is read ``$A$ is a $k$-point of $\mathfrak{g}$''. The Lie algebra $\mathfrak{g}$ is a vector space of dimension $d$
spanned by a set of matrices $M_1 ,\ldots, M_d \subset {\mathcal{M}_n }(\mathbb{C})$.    The set $\mathfrak{g}(k)$ of {\em $k$-points of $\mathfrak{g}$}  is  defined as
 $$
\mathfrak{g}(k):=\{f_1 M_1 + \cdots + f_d M_d,\quad f_i \in k\}.
$$

\noindent Similarly, given a linear algebraic group $G$, the set $G(k)$ of $k$-points of $G$ is the set of matrices whose entries are in $k$ and satisfy the defining equations of $G$. The question is whether a reduced form exists. If $k$ is a $C_1$-field\footnote{A field $k$ is called quasi-algebraically closed (or $\mathrm{C}_1$) if every non-constant homogeneous polynomial $P\in k[X_{1},\ldots,X_{n}]$  of degree less than $n$ has a non-trivial zero in $k^{n}$ (\cite{La52a}). An instance of one such field is $k=\mathbb{C}(x)$. In addition, by Theorem 5 of \cite{La52a} we know that the algebraic closure of a $C_1$ field is $C_1$ as well.}  then the following result due to Kolchin and Kovacic shows that the answer is positive:
\begin{thm}[see \cite{Ko71a} or \cite{PuSi03a} p.25 Corollary 1.32]\label{Kovacic}
Let $k$  be a differential $C_1$-field. Let $A\in\mathcal{M}_n (k)$ and assume that the differential Galois Group $G$ of the system $Y'=AY$ is connected. Let $\mathfrak{g}$ be the Lie algebra of $G$. Let $H$ be a connected algebraic group such that
its Lie algebra $\mathfrak{h}$  satisfies $A\in\mathfrak{h}(k)$. 
Then $G\subset H$ and there exists $P\in H(k)$ such that the equivalent  differential equation 
$F'= R F$, with $Y=PF$ and $R =P^{-1}(AP- P')$, satisfies $R \in \mathfrak{g}(k)$.
\end{thm}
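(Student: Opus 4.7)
The plan is to split the argument into two stages. First, deduce from $A\in\mathfrak{h}(k)$ that $G\subset H$; second, use the $C_1$ hypothesis to produce, via a torsor-triviality argument, the reducing matrix $P\in H(k)$ that conjugates $A$ into $\mathfrak{g}(k)$.

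For the inclusion $G\subset H$, the key observation is that one can choose a fundamental solution matrix $U\in H(K)$. Indeed, when $A\in\mathfrak{h}(k)$, the derivation $X\mapsto AX$ on $k[X_{ij},\det(X)^{-1}]$ preserves the defining ideal of $H$, so there exists a Picard-Vessiot ring in which the universal solution is $H$-valued; equivalently, the flow of $Y'=AY$ from the identity stays in $H$. For any Galois automorphism $\sigma$, both $U$ and $\sigma(U)=UM_\sigma$ lie in $H(K)$, which forces $M_\sigma\in H(C)$, so $G\subset H$. Now consider the quotient variety $H/G$ and the point $\bar U:=UG\in(H/G)(K)$. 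Because $\sigma$ acts on $U$ by right multiplication by an element of $G$, the class $\bar U$ is Galois-fixed, so $\bar U\in(H/G)(k)$.

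The second stage is the heart of the proof: the fiber $T:=\pi^{-1}(\bar U)$ of the projection $\pi\colon H\to H/G$ is a $G$-torsor defined over $k$. Since $k$ is $C_1$ and $G$ is connected, a theorem of Springer (combined with Lang's observation that the algebraic closure of a $C_1$ field is again $C_1$) ensures that every $G$-torsor over $k$ is trivial; in particular $T(k)$ is nonempty and we may pick $P\in T(k)\subset H(k)$. By construction $P\in UG$, so $P=Ug$ for some $g\in G(K)$; the gauge transformation by $P$ then yields the new fundamental matrix $V:=P^{-1}U=g^{-1}\in G(K)$, which satisfies $V'=RV$ with $R:=P^{-1}(AP-P')$.

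Finally, to check $R\in\mathfrak{g}(k)$, I would combine two observations. On the one hand, $R\in\mathfrak{h}(k)$: the term $P^{-1}AP$ lies in $\mathfrak{h}(k)$ because $\mathrm{Ad}(H)$ preserves $\mathfrak{h}$ and $P\in H(k)$, while $P^{-1}P'\in\mathfrak{h}(k)$ is the logarithmic derivative of a $k$-rational point of $H$. On the other hand, $R=V'V^{-1}$ with $V\in G(K)$, so $R\in\mathfrak{g}(K)$. Since $\mathfrak{g}$ is a $C$-subspace of $\mathfrak{h}$, the intersection $\mathfrak{g}(K)\cap\mathfrak{h}(k)$ is exactly $\mathfrak{g}(k)$ (decompose $\mathfrak{h}=\mathfrak{g}\oplus V_0$ over $C$ and compare components), which yields $R\in\mathfrak{g}(k)$. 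The main obstacle is the torsor-triviality step, where the full force of the $C_1$ hypothesis enters essentially; everything else is a routine unwinding of gauge transformations together with the Lie correspondence.
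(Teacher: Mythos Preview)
The paper does not supply its own proof of this theorem: it is stated with a citation to Kovacic and to van der Put--Singer (Corollary~1.32) and used as a black box, so there is no in-paper argument to compare against.

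That said, your outline is essentially the standard proof found in those references. The two-stage structure---first observing that $A\in\mathfrak{h}(k)$ yields a fundamental matrix $U\in H(K)$ and hence $G\subset H$, then descending $U$ to a $k$-point of $H/G$, identifying the fiber as a $G$-torsor over $k$, and invoking triviality of connected-group torsors over $C_1$ fields to find $P\in H(k)$---is exactly the line of reasoning in \cite{PuSi03a}. A couple of minor remarks: the cohomological vanishing result you invoke is usually attributed to Steinberg (for arbitrary connected linear algebraic groups over $C_1$ fields), though Springer's name is attached to related results; and your final step can be shortened, since once you know $R$ has entries in $k$ and $R=V'V^{-1}\in\mathfrak{g}(K)$, the fact that $\mathfrak{g}\subset M_n(C)$ is cut out by $C$-linear equations immediately gives $R\in\mathfrak{g}(k)$, making the detour through $\mathfrak{h}(k)$ unnecessary (though not incorrect).
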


\begin{cor}\label{kovacic-corollary}
We use the same notations as above but now assume that $G$ is {\em not} connected.
Let $K$ be a Picard-Vessiot extension of $k$ for $Y'=AY$.
Let $k_1$ denote the algebraic closure of $k$ in $K$.
Then there exists
	$P\in H(k_{1})$
such that the equivalent differential equation $f'=Rf$ with $Y=Pf$ and $R:=P^{-1}(AP- P')$ satisfies
	$R \in \mathfrak{g}(k_1)$.
\end{cor}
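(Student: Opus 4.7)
The plan is to reduce the non-connected case to the connected case, which is already handled by Theorem \ref{Kovacic}, by a base change from $k$ to $k_1$. Two classical facts make this possible. First, by the footnote reference to Theorem 5 of \cite{La52a}, an algebraic extension of a $C_1$-field is again $C_1$; in particular $k_1$ is a $C_1$-field. Second, by the standard Galois correspondence in Picard--Vessiot theory, the algebraic closure of $k$ in the Picard--Vessiot field $K$ corresponds to the connected component of the identity, i.e.\ $\partial\mathrm{Aut}(K/k_1) = G^\circ$, and $K$ remains a Picard--Vessiot extension of $k_1$ for the same system $Y' = AY$.

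Next I would observe that the hypotheses needed to invoke Theorem \ref{Kovacic} over $k_1$ are all free. The inclusion $A \in \mathfrak{h}(k) \subset \mathfrak{h}(k_1)$ is immediate. The group $G^\circ$ is connected by construction, and its Lie algebra equals $\mathfrak{g}$, since a linear algebraic group and its identity component share the same Lie algebra (as already recalled in Section~\ref{background}). So, viewing $[A]$ as a differential system over $k_1$, the differential Galois group is $G^\circ$, which is a connected subgroup of $H$ with Lie algebra $\mathfrak{g}$, and $A \in \mathfrak{h}(k_1)$.

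I would then apply Theorem \ref{Kovacic} verbatim to the system $Y' = AY$ over $k_1$. This yields a gauge transformation $P \in H(k_1)$ such that the equivalent system $f' = R f$ with $R := P^{-1}(AP - P')$ satisfies $R \in \mathfrak{g}(k_1)$, which is precisely the statement of the corollary.

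The only step that is not purely formal is the Picard--Vessiot fact that $k_1$ is the fixed field of $G^\circ$ and that $K/k_1$ is itself Picard--Vessiot for $[A]$; this is the main (minor) obstacle but is classical (see \cite{PuSi03a}). Everything else is a direct transfer of Theorem \ref{Kovacic} along the extension $k \subset k_1$.
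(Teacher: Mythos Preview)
Your proposal is correct and follows essentially the same route as the paper's proof: pass to the base field $k_1$, observe that $k_1$ is still $C_1$ (algebraic extension of a $C_1$-field), that $K/k_1$ is Picard--Vessiot with Galois group $G^\circ$ (the paper cites \cite{PuSi03a}, Proposition~1.34), and then apply Theorem~\ref{Kovacic} over $k_1$. Your write-up is in fact slightly more explicit than the paper's own proof in spelling out why the hypotheses of Theorem~\ref{Kovacic} carry over.
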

\begin{proof}
By virtue of \cite{PuSi03a} (Proposition 1.34 p. 26) the connected component of $G$ containing the identity $G^{\circ}$, satisfies $G^\circ = \partial Aut_{k_1}(K)$.  As  $k_1$ is an algebraic extension of $k$, it is still a $C_1$-field. We now pick $k_1$ as a base field; then $K$ is a Picard Vessiot extension of $k_1$ with Galois group $G^\circ$ satisfying the hypotheses of Theorem \ref{Kovacic}.
\end{proof}
Thus, for a $C_1$-field $k$  and $A\in \mathcal{M}_n(k)$,  a reduced form of $[A]$ will be given by a gauge equivalent matrix $R\in\mathfrak{g}(k_1)$, i.e $R:=P[A]$ for some $P\in GL_n (k_1)$. The change of variable given by $P$ is algebraic~;  the related gauge transformation does not introduce transcendental coefficients, hence preserving $G^\circ$ and $\mathfrak{g}$.
\begin{defn}
Consider a differential system $Y'=AY$. We keep the notations of Theorem \ref{Kovacic} and corollary
\ref{kovacic-corollary}. A matrix $P\in GL_{n}(k_{1})$ is called a {\em reduction matrix} for $[A]$
if $P[A]$ is in reduced form, i.e $P[A]  \in \mathfrak{g}(k_{1})$.
\end{defn}

In this work, we will show how to make the Kolchin-Kovacic reduction theorem effective for $2\times2$ systems with Galois group in $\mathrm{SL}(2,C)$ and for some systems with group in $\mathrm{Sp}(4,C)$
(obtained via the linearization of Hamiltonian systems).
In both cases, we will see that one does not need the ``$C_{1}$-field'' hypothesis.
There are other types of systems (e.g $3\times3$ systems with Galois group $\mathrm{SO}(3)$) for which effective reduction may be performed but where the ``$C_{1}$-field'' hypothesis cannot be eluded (see chapter 3 of \cite{Ap10a}, also \cite{JuLe07a}).

\subsection{The Lie algebra associated to $A$}
Consider a matrix $A=\left(a_{i,j}\right) \in {\mathcal M}_{n}(k)$. Let $a_1,\ldots,a_r$ denote a basis of the $C$-vector space spanned by the $a_{i,j}$ for $i,j=1,\ldots,n$.
We thus have a decomposition $A:=\sum_{i=1}^{r} a_{i}(t) M_{i}$ where the  $M_{i}$ are constant matrices. Of course, the choice of the $a_{i}$ in this decomposition is not unique, but the $C$-vector space generated by the $M_{i}$ is.\\
 The Lie algebra generated by $M_{1},\ldots,M_{r}$ (i.e generated as a $C$-vector space by the $M_{i}$ and all their iterated Lie brackets) will be called the {\em Lie algebra associated to $A(t)$} and denoted by $Lie(A)$. Its dimension $d$ (as a vector space) satisfies $r\leq d\leq n^{2}$.
With this terminology, a system $Y'=AY$ is in reduced form if $Lie(A)$ is the Lie algebra $Lie(Y'=AY)$ of the differential Galois group.\\
This notion of Lie algebra associated to $A$ appears in works of Magnus and of Wei and Norman, \cite{WeNo63a,WeNo64a}.\\

Pick a gauge transformation $P\in\mathrm{GL}_n (k)$ and let $P[A]:=P^{-1}(AP-P')$;
we say that $P[A]$ is a {\em partial reduction} (and that $P$ is a partial reduction matrix) if $Lie(P[A])\subsetneq Lie(A)$.
As we will see in section 4, a reduced form is obtained by means of a sequence of partial reductions.\\

Consider now a system $Y'=RY$ in reduced form, i.e  $R\in \mathfrak{g}(k)$ where $\mathfrak{g}:=\mathrm{Lie}(Y'=RY)$.
Call $r$ a minimal number of generators of $\mathfrak{g}$ as a Lie algebra, (which is generally less than its dimension $d$ as a vector space)
and let $m$ be the dimension of the $C$-vector space generated by the coefficients of $R$.
We say $R$ is \emph{maximally reduced} if $m=r$.
Note that if $\mathfrak{g}$ is abelian, then a reduced form is automatically maximally reduced (because $r=d$). 

\begin{exmp}
{\em Consider the linear differential system $Y'=AY$ with $$A:={\tiny\left[
\begin{array}{cccc}
0  & f_1  &  f_{3} & f_2\\
0  & 0  &  f_2 &0\\
 0   & 0  & 0  & 0\\
0  & 0  & -f_1  & 0
\end{array}
\right]}
\in \mathcal{M}_4 (k)$$ (where functions $f_{i}\in k$ are linearly independent over $C$)
and assume that this is a reduced form, i.e the Lie algebra of its Galois group is the Lie algebra generated by
$$ M_1:={\tiny \left[
\begin{array}{cccc}
0  & 1  &  0 &0\\
0  & 0  &  0 &0\\
 0   & 0  & 0  & 0\\
0  & 0  & -1  & 0
\end{array}
\right]} \,,\, M_2:={\tiny \left[
\begin{array}{cccc}
0  & 0  &  0 & 1\\
0  & 0  &  1 &0\\
 0   & 0  & 0  & 0\\
0  & 0  & 0  & 0
\end{array}
\right]}\,\text{ and }\,M_3 :={\tiny \left[
\begin{array}{cccc}
0  & 0  &  1 &0\\
0  & 0  &  0 &0\\
 0   & 0  & 0  & 0\\
0  & 0  & 0 & 0
\end{array}
\right]}. $$
We see that $[M_1 , M_2]=2 M_{3}$, which is linearly independent from $M_1$ and $M_2$.
Thus, if we call $\mathfrak{g}$ the Lie algebra generated by $M_{1}$ and $M_{2}$, it is spanned over $C$ by $M_1 , M_2$ and $M_3$
and we have that $\mathrm{dim}_C(\mathfrak{g})=3 \geq 2$.
We see that system $[A]$ is maximally reduced if (and only if) $f_{3}=0$.}$\square$

\end{exmp}

\subsection{Solving abelian reduced systems}\label{Solving abelian reduced systems}

Consider a linear differential system $Y'=M(t)Y$ such that $M(t)\in\mathfrak{h}(k)$ where $\mathfrak{h}$ is the Lie algebra associated to $M(t)$. If $\mathfrak{h}:=\mathrm{span}_{C}\lbrace M_1 ,\ldots , M_d\rbrace$ then we can write $$M=\sum^{d}_{i=1} f_i M_i\quad \text{with}\quad f_i \in k \quad \text{for} \quad i=1\ldots d .$$
As noted by Wei and Norman (who use this  in \cite{WeNo63a,WeNo64a}), if $\mathfrak{h}$ is abelian, solving the system is straightforward.
For instance, suppose that $d=2$ and  that $\mathfrak{h}$ is abelian, i.e. $[M_1\,,\, M_2]=0$. In this situation the system will be of the form  $Y'=(f_1 M_1 + f_2 M_2) Y$. The two differential systems: $Y'_1 = f_{1}M_1 Y_1$ and $Y'_2 = f_{2}M_2 Y_2$ admit respectively $U_1 := \mathrm{exp}(\int f_1 M_1)$ and $U_2 := \mathrm{exp}(\int f_2 M_2)$ as fundamental solution matrices. As $[U_1 \,,\, M_2] = 0$,
a calculation shows that $(U_1 U_2)' = (f_1 M_1 + f_2 M_2) U_1 U_2$. This argument can be extended to $d\geq 2$ by  induction. Therefore, if $\mathfrak{h}$ is abelian  and we consider a system $M$ in reduced form, we only need to solve each separate system $Y'_j = f_j M_j Y_j$: $U=\prod^{d}_{i=1}\mathrm{exp}(\int f_i  M_i)$ is a fundamental solution matrix for the complete system. Such solving methods apply, more generally, when $A$ commutes with $\int A$ or when $Lie(A)$ is solvable (this is the Wei-Norman method exposed in \cite{WeNo63a,WeNo64a}).

Of course, solving is not the ultimate aim of system reduction, but these formulae will be useful in proving the correctness of our reduction procedure.

\subsection{Reduced form when $G\subset SL(2,\mathbb{C})$}\label{RedG2}

The Kovacic algorithm \cite{Ko86a} is an algorithm devised to compute Liouvillian solutions of second order linear
differential equations. In fact, it can be used to compute reduced forms for $Y'=NY$ with $N\in \mathcal{M}_2(k)$,
when $G^\circ _N\subset SL(2,C)$. We recall standard notations for abelian (non-trivial) connected subgroups
of $SL(2,C)$:  the additive group is $ \mathbb{G}_a := {\tiny\left\lbrace\left[\begin{array}{cc} 1 & c \\ 0 & 1\end{array}\right]\,:\,c\in\mathbb{C}  \right\rbrace }$ with Lie algebra
$ \mathfrak{g}_a := {\tiny\left\lbrace\left[\begin{array}{cc} 0 & \alpha \\ 0 & 0\end{array}\right]\,:\,\alpha\in\mathbb{C}  \right\rbrace} $,
and the multiplicative group is $\mathbb{G}_m := {\tiny\left\lbrace\left[\begin{array}{cc} c & 0 \\ 0 & c^{-1}\end{array}\right]\,:\,c\in\mathbb{C}^\star  \right\rbrace}$ with Lie algebra
$\mathfrak{g}_m := {\tiny\left\lbrace\left[\begin{array}{cc} \alpha & 0 \\ 0 & -\alpha\end{array}\right]\,:\,\alpha\in\mathbb{C} \right\rbrace}$.

In what follows, we assume  that the differential Galois group of $[N]$ is in $SL(2,C)$ and $\mathrm{Tr}(N)=0$, i.e
$N\in sl(2,k)$ (otherwise an easy reduction puts us in this form). As a consequence, any fundamental solution
matrix has a constant determinant.

 Let $K_N$ be a Picard-Vessiot  extension of $k$ associated to $Y'=NY$. The next proposition gives a complete reduction procedure, using the Kovacic algorithm, i.e shows how one can compute reduced forms and reduction matrices using the Kovacic algorithm for Liouvillian solutions (and its extension to algebraic solutions in \cite{SiUl93a}); although it will probably not surprise specialists, we include it for completeness and because the reduction matrices will be used in the next section.

\begin{prop}\label{Kovacic algorithm}
Consider a $2\times 2$ linear differential system $Y'=NY$ with  $\mathrm{Tr}(N)=0$ (i.e $N\in sl(2,k)$) and Galois group $G\subset SL(2,C)$.
Then $G\subsetneq SL(2,C)$ if and only if there exists an algebraic extension $k^\circ$ of $k$ such that one of the following (mutually exclusive and to be read in this order) cases holds. The field $k^\circ$ is given by the Kovacic algorithm and its extension by Singer and Ulmer \cite{SiUl93a} (Theorem 4.1) for algebraic solutions.
\begin{enumerate}
\item[Case (1):] The system $[N]$ admits two solutions\footnote{This phrase is an abuse of language which, precisely means: the space of the solutions of $[N]$ that belong to $(k^\circ)^{2}$ is of dimension two.}
$Y_{1},Y_{2}\in (k^\circ)^{2}$. Then $G_{N}$ is finite and $\mathfrak{g}=\{0\}$.
\\
Let $P=(Y_{1},Y_{2}) \in SL(2,k^\circ)$ (after multiplying $Y_{i}$ by a scalar so that $\det(P)=1$).
\\
Then $P[N]=0\in \mathfrak{g}(k^\circ)$ and $P$ is a reduction matrix.

\item[Case (2):] The system $[N]$ admits one solution
$Y_{1}\in (k^\circ)^{2}$. Then $G_{N}^\circ=\mathbb{G}_a$.
\\
Let $P=(Y_{1},F_{2}) \in SL(2,k^\circ)$ (where $F_{2}\in (k^\circ)^2$ is any vector such that $\det(P)=1$).
\\
Then $P[N]=\tiny\left[\begin{array}{cc} 0 & a\\ 0 & 0 \end{array}\right]\in \mathfrak{g}_a(k^\circ)$ and $P$ is a reduction matrix.
\item[Case (3):] There exist $a\in k^\circ$ and $F_{1},F_{2}\in (k^\circ)^{2}$ such that,
if $f$ is a solution of $f'=a f$, $Y_{1}:=f.F_{1}$ and $Y_{2}:=\frac1f F_{2}$ are solutions of $[N]$
(exponential over $k^\circ$).
Then $G_{N}^\circ=\mathbb{G}_m$.
\\
Let $P=(F_{1},F_{2}) \in SL(2,k^\circ)$ (after multiplying $F_{i}$ by a scalar so that $\det(P)=1$).
 Then $P[N]=\tiny\left[\begin{array}{cc} a & 0\\ 0 & -a \end{array}\right] \in \mathfrak{g}_m(k^\circ)$ and $P$ is a reduction matrix.

\item[Case (4):] There exist $a\in k^\circ$ and $F_{1}\in (k^\circ)^{2}$ such that,
if $f$ is a solution of $f'=a f$, $Y_{1}:=f.F_{1}$ is a solution of $[N]$ (exponential over $k^\circ$).
Then $\mathfrak{g}=\left\{ \tiny\left[\begin{array}{cc} c & d\\ 0 & -c \end{array}\right] | c,d\in C\right\}$.
Let $P=(F_{1},F_{2}) \in SL(2,k^\circ)$ (where $F_{2}\in (k^\circ)^2$ is any vector such that $\det(P)=1$).
Then $P$ is a reduction matrix.

\end{enumerate}

\end{prop}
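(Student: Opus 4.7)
The plan is to show the four cases follow directly from the four output cases of Kovacic's algorithm (augmented by the Singer--Ulmer algorithm for algebraic solutions), then verify that the reduction matrix $P$ constructed from the algorithmic data has the claimed effect. First I would invoke the classification behind Kovacic's algorithm: if $G\subsetneq \mathrm{SL}(2,C)$, then $G^\circ$ is conjugate to either $\{I\}$, $\mathbb{G}_a$, $\mathbb{G}_m$, or the Borel subgroup of upper triangular matrices. The algorithm (extended in \cite{SiUl93a}) produces an algebraic extension $k^\circ/k$ over which one has, respectively, two algebraic solutions of $[N]$, one algebraic solution, two exponential solutions (i.e.\ $Y_i=f^{\pm 1}F_i$ with $f'/f=a\in k^\circ$), or a single exponential solution. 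These four possibilities correspond bijectively to the four cases listed, and are mutually exclusive in the stated order (an algebraic solution is exponential with $a=0$, so one must test algebraic solutions first).

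The core computation is the same in all four cases and uses the key observation that if $Y\in(k^\circ)^2$ is a solution of $Y'=NY$ then the corresponding column of $NP-P'$ vanishes. Concretely, in Case (1), $P=(Y_1,Y_2)$ satisfies $P'=NP$, hence $P[N]=P^{-1}(NP-P')=0$, which trivially lies in $\{0\}=\mathfrak{g}(k^\circ)$. In Case (2), with $P=(Y_1,F_2)$, we have $NP-P'=(0,NF_2-F_2')$, so the first column of $P[N]$ is zero. Combined with the identity
\begin{equation*}
\mathrm{Tr}(P[N])=\mathrm{Tr}(P^{-1}NP)-\mathrm{Tr}(P^{-1}P')=\mathrm{Tr}(N)-\frac{(\det P)'}{\det P}=0,
\end{equation*}
(valid since $\det P=1$ and $N\in\mathfrak{sl}(2,k)$), the $(2,2)$-entry also vanishes, forcing $P[N]\in\mathfrak{g}_a(k^\circ)$.

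In Case (3), the relations $Y_i'=NY_i$ translate, after dividing by $f^{\pm 1}$, into $NF_1=aF_1+F_1'$ and $NF_2=-aF_2+F_2'$. Thus $NP-P'=P\cdot\mathrm{diag}(a,-a)$, so $P[N]=\mathrm{diag}(a,-a)\in\mathfrak{g}_m(k^\circ)$. In Case (4), only the first relation holds, so $NP-P'=(aF_1,NF_2-F_2')$; this shows the first column of $P[N]$ is $(a,0)^T$, and the trace argument above forces the $(2,2)$-entry to be $-a$, yielding $P[N]=\left[\begin{smallmatrix} a & c\\ 0 & -a\end{smallmatrix}\right]$ for some $c\in k^\circ$, which lies in the Borel Lie algebra.

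The only delicate point, and hence the main obstacle, is ensuring that (a) the solutions/vectors returned by Kovacic's algorithm can be chosen in $k^\circ$ and normalized so that $\det P=1$, and (b) the case distinction captures $G^\circ$ rather than just the full Galois group. For (a), the algorithm produces solutions of the required type up to multiplication by constants of $\bar{C}\subset k^\circ$, so rescaling one column by $1/\det P$ keeps us in $(k^\circ)^2$; in Cases (2) and (4) the choice of $F_2$ is free, so taking any complement with $\det P=1$ suffices. For (b), the identification $\mathfrak{g}=\mathrm{Lie}(G)=\mathrm{Lie}(G^\circ)$ shows that the Lie algebra is determined by $G^\circ$ alone, so enlarging $k$ to $k^\circ$ (which only removes the component group) is harmless, consistent with Corollary~\ref{kovacic-corollary}.
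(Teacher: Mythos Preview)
Your proof is correct and follows essentially the same architecture as the paper's: invoke the classification of proper algebraic subgroups of $\mathrm{SL}(2,C)$ underlying Kovacic's algorithm to produce the four cases, then verify in each case that the proposed $P$ gauges $N$ into the claimed Lie algebra. The only difference is in the verification step. The paper factors a fundamental solution matrix as $U=PV$, reads off the shape of $V$ from the Galois-group structure (e.g.\ $V=\left[\begin{smallmatrix}1&v\\0&1\end{smallmatrix}\right]$ in Case~(2), $V=\mathrm{diag}(f,f^{-1})$ in Case~(3)), and then deduces the form of $P[N]$ from $V'=P[N]\,V$. You instead compute $P[N]=P^{-1}(NP-P')$ column by column directly from the differential relations satisfied by $Y_i$ or $F_i$, and use the trace identity $\mathrm{Tr}(P[N])=\mathrm{Tr}(N)-(\det P)'/\det P=0$ to pin down the remaining diagonal entry. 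Your route is slightly more elementary (no auxiliary $V$ needed) and makes the role of $\det P=1$ transparent; the paper's route makes the link to the Galois-group representation more visible. Both are equally valid and of comparable length.
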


\begin{proof}
We review each case (in the proposed order) and study the Galois group in each case.
Classifications of the algebraic subgroups of $SL(2,C)$ will show that these four cases cover all proper algebraic subgroups of $SL(2,C)$.
\\
Case $(1)$ is the case when the group is finite. Then the proposed $P$ is a fundamental solution matrix. Our hypothesis on $N$ (i.e $Tr(N)=0$) implies that $\det(P)$ is constant. Multiplying one column by a constant, we may secure that $\det(P)=1$. If now $U$ is a fundamental solution of $[N]$ and we write $U=PC$, then
$C'=0$~; as $C$ is a fundamental matrix for $[P[N]]$, this confirms that $P[N]=0$.
\\
In Case $(2)$, $G^\circ$ has an invariant vector $Y_{1}$ so $G^\circ\subset \mathbb{G}_a$ and, since $G$ is not finite, (otherwise case $(1)$), we have $G^\circ = \mathbb{G}_a$. Pick a fundamental matrix $U$ with determinant $1$
and write it as $U=PV$. As $Y_{1}$ is a solution, the first column of $V$ can be chosen to be $(1,0)^T$.
Now, as $\det(V)=1$, we obtain $V=  \tiny\left[\begin{array}{cc} 1 & v\\ 0 & 1 \end{array}\right]$ for some $v\in K$.
As $V'=P[N].V$, the form of $P[N]$ follows.
\\
In Case $(3)$, we have $\mathfrak{g}=\mathfrak{g}_m$. A fundamental solution matrix $U$ can be written
$U=PV$ with $V=\tiny\left[\begin{array}{cc} f & 0\\ 0 & f^{-1} \end{array}\right]$ and the result follows.
\\ In Case $(4)$, the galois group is
$G=\left\{  \tiny\left[\begin{array}{cc} c & d\\ 0 & c^{-1} \end{array}\right]\,|\, c\in C^*,d\in C\right\}$. Calculations are the same as in $(2)$ and $(3)$ and are left to the reader.
\end{proof}

\begin{rem}
\begin{enumerate}
\item In case $(1)$, the field $k^\circ$ can have arbitrarily large degree (in the notations of \cite{SiUl93a}, if
$G=D_{n}^{SL_{2}}$ then $k^\circ$ is of degree $4n$ over $k$).
\\
In case $(2)$,
$G=\left\{  \tiny\left[\begin{array}{cc} \mu^k & d\\ 0 & \mu^{-k} \end{array}\right] \,|\,k\in\{0,..,n-1\},d\in C\right\}$
where $\mu$ is an $n$-th root of unity. Then $Y_{1}=f.F_{1}$ with $F_{1}\in k^2$ and $f^n\in k$.
So $k^\circ:=k(f)$ is a cyclic extension of degree $n$ of $k$.
\\
In case $(3)$, we have $G=\mathbb{G}_m$ (then $k^\circ=k$) or $G=D_{\infty}^{SL_{2}}$ (then
$k^\circ$ is quadratic over $k$). In case $(4)$, $k^\circ=k$.
\item Cases $(1)$, $(2)$ and $(3)$ are the cases when $\mathfrak{g}$ is abelian. In that case, we just saw that
the field $k^\circ$ may have large degree. However, for the applications that we have in mind
(normal variational of Hamiltonian systems), it turns out that $k^\circ=k$ in most examples\footnote{Actually, in {\em all} examples from mechanics that we can think of, but we may have missed a few.} that we know.

\end{enumerate}
\end{rem}

For $2\times 2$ systems, the construction shows that  reduced forms are the most natural and the simplest forms into which a system $N\in M_2(k^\circ)$
(with $\mathfrak{g}_N$ abelian) can be put.
The fields over which such reductions can be performed effectively via the Kovacic algorithm are given in \cite{UlWe96a} (see subsection 4.1).

\section{Reduced form of first variational equations and non-integrability}\label{RedandInt}

Consider a linear differential system $[A]:\; Y'=AY$ with $A\in\mathfrak{sp}(4,k)$.  Call $G$ its Galois group and $\mathfrak{g}$ its Lie algebra.
We further assume that we know a rational solution of $[A]$ (i.e. with coefficients in $k$) so that the material applies to variational equations of Hamiltonian systems along a known non-constant solution.

The aim of this section is to establish an algorithm which either finds an obstruction to the abelianity of $\mathfrak{g}$ or returns a reduced form for $A$ if $\mathfrak{g}$ is abelian. In view of its application to Hamiltonian systems, this would either prove the non-integrability of the system (an effective version of Theorem \ref{MR}) or put the first variational equation in a reduced form so as to simplify the study of higher order variational equations.

\subsection{Admissible base fields}\label{admissiblefields}
In view of practical computation,  we first assume that $k$ is an {\em effective field} (i.e one can perform the arithmetic operations $+,-,*,/$ and algorithmically test when two elements of $k$ are equal).
In order to check effectively the abelianity of $\mathfrak{g}$, we need three algorithmic tools:
\begin{enumerate}
\item The {\em Kovacic algorithm} (for solving second order differential systems, see \cite{Ko86a,UlWe96a}),
\item An algorithm to solve {\em Risch equations}~:\label{Risch}
	for $f,g\in k$, decide whether the equation $y'=f y+g$ has a solution belonging to $k$ (and, if yes, compute it),
\item An algorithm to solve {\em Limited integration} problems~:\label{LIP}
	 for $f,g \in k$, decide whether there is a constant $\beta$ and $h\in k$ such that $f+\beta g=h'$
(and, if yes, compute them).
\end{enumerate}
We now study conditions on $k$ so that all computations required by the above problems can be performed
effectively.

\begin{defn} We say that $k$ is an {\em admissible field} if it is an effective field and if it comes equipped with algorithms to perform the three tasks above: Kovacic algorithm, solving Risch equations and solving limited integration problems.
\end{defn}

Given an operator $L\in k[\partial]$, we say that a solution $y$ of $L(y)=0$ is rational if $y\in k$,
and we say that $y$ is exponential if $y'/y\in k$.
\begin{lem}\label{admissible-field}
Let $(k,\partial)$ be an effective differential field such that, given $L\in k[\partial]$, one can effectively
\begin{enumerate}
\item compute a basis of rational solutions of $L(y)=0$ (i.e solutions in $k$) when such a basis exists, and
\item compute all right-hand first order factors (i.e solutions $y$ such that $y'/y\in k$, exponential solutions) of second order linear differential equations.
\end{enumerate}
Then $k$ is an admissible field.
\end{lem}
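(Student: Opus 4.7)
The plan is to reduce each of the three admissibility primitives---solving Risch equations, limited integration, and the Kovacic algorithm---to the two hypothesised oracles: rational solutions of an arbitrary $L\in k[\partial]$ and exponential solutions of a second-order operator. I will treat them in increasing order of difficulty.

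For the Risch problem $y' = fy + g$ (with $g \ne 0$), I form the second-order operator $M := (\partial - g'/g)(\partial - f)$. A direct computation shows that $M(y) = 0$ for $y \in k$ forces $u := y' - fy$ to satisfy $(u/g)' = 0$, so $u/g \in C$. Hence the finite-dimensional space $V\subset k$ of rational solutions of $M$ (computable via hypothesis (1)) comes equipped with a canonical $C$-linear map $\phi : V \to C, \; y \mapsto (y' - fy)/g$. The Risch problem has a solution in $k$ iff $\phi$ is nonzero; any $y_0 \in V$ with $c := \phi(y_0) \ne 0$ then yields $y_0/c$ as the desired element. The case $g=0$ is the trivial homogeneous problem and is immediate from hypothesis (1) applied to the first-order operator $\partial - f$.

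The same idea one order higher handles limited integration. Given $f, g \in k$ with $g \ne 0$, set $\psi := f' - (g'/g)f$. If $\psi = 0$ then $f/g\in C$ and $(h,\beta) = (0, -f/g)$ works. Otherwise every valid pair $(h,\beta)$ forces $L_2(h) := h'' - (g'/g)h' = \psi$, and applying $\partial - \psi'/\psi$ on the left yields the homogeneous third-order equation $L_3(h) := (\partial - \psi'/\psi)(\partial - g'/g)\partial(h) = 0$. I would compute all rational solutions of $L_3$ via hypothesis (1), note that $L_2(h)/\psi$ is automatically a constant for each such $h$ (by the same argument as before), and keep those solutions for which this constant is nonzero. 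Rescaling one of them produces both $h$ and the constant $\beta = (h'-f)/g$.

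The subtlest task is Kovacic's algorithm itself. Here I would invoke the reformulation of Ulmer--Weil \cite{UlWe96a}, building on \cite{Ko86a,SiUl93a}, in which each of the four Galois-theoretic cases of Proposition \ref{Kovacic algorithm} is detected by searching for rational or exponential solutions of either $L$ or a suitably chosen symmetric-power construction $\mathrm{sym}^{m}(L)$ with $m\in\{2,4,6,12\}$. Case (1) is precisely hypothesis (2) applied to $L$, and all rational-solution queries on the auxiliary operators fall under hypothesis (1). The obstacle is that the symmetric-power operators have order strictly larger than two, so hypothesis (2) does not cover the search for their exponential solutions directly. The remedy, which is the technical heart of the argument, is the standard recursive factorisation scheme: peel off first-order right factors by running a rational-solution oracle on the associated Riccati operators, and whenever an irreducible order-two right factor appears, apply hypothesis (2) to extract exponentials. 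Verifying that this recursion terminates within the bounds guaranteed by Kovacic's classification, and that each intermediate operator it produces is itself a derived object for which hypothesis (1) remains applicable over the same base field $k$, is where I would concentrate the detailed bookkeeping.
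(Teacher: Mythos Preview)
Your reductions for the Risch equation and limited integration are correct and coincide with the paper's: your $(\partial - g'/g)(\partial - f)$ is the monic form of the paper's $(g\partial - g')(\partial - f)$, and your operator $(\partial - \psi'/\psi)(\partial - g'/g)\partial$ equals the paper's $\mathrm{LCLM}(\partial - f'/f,\partial - g'/g)\,\partial$, since both are monic of order three annihilating exactly $1,\int f,\int g$. The extra care you take in extracting the constants via the linear functionals $\phi$ and $h\mapsto L_2(h)/\psi$ is fine and slightly more explicit than the paper.

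The Kovacic paragraph is where you stumble. The paper, like you, invokes \cite{UlWe96a}, and that citation alone closes the argument: Ulmer and Weil establish precisely that hypotheses (1) and (2) suffice for the full algorithm. Your perceived ``obstacle''---that the symmetric powers $\mathrm{sym}^m(L)$ have order greater than two---is illusory. In their scheme, Kovacic's imprimitive case needs only a \emph{rational} solution $r=y_1 y_2$ of $\mathrm{sym}^2(L)$; from $r$ and the Wronskian one recovers each $y_i'/y_i$ by solving a quadratic over $k$. The finite primitive case likewise requires only rational solutions of higher symmetric powers, after which everything is algebraic. Exponential solutions of operators of order greater than two are never sought. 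Your proposed ``recursive factorisation scheme'' is therefore unnecessary, and as phrased it is circular: peeling off a first-order right factor of an operator $M$ \emph{is} finding an exponential solution of $M$, and the associated Riccati equation is nonlinear, so hypothesis (1) does not apply to it. Drop that discussion and let the citation stand.
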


\begin{proof}
If $y$ satisfies a limited integration problem, then $y$ is a solution in $k$ of $L(y)=0$ with
$L=LCLM(\partial-f'/f,\partial - g'/g) \partial$ (where $LCLM$ denotes the least common left multiple in
$k[\partial]$, see \cite{PuSi03a} p.50). Indeed, a basis of solutions of $L$ is $1,\int f,\int g$. If now $y$ solves a Risch equation, then a similar calculation shows that  $y$ is a solution in $k$ of $L(y)=0$ with $L=(g\partial - g') (\partial - f)$.
Last, in \cite{UlWe96a}, it is shown that, if the two conditions of the Lemma are fulfilled, then one can apply a full
Kovacic algorithm.
\end{proof}
In \cite{Si91a}, Lemma 3.5, it is shown that if $k$ is an elementary extension
of $C(x)$ or if $k$  is an algebraic extension of a purely transcendental liouvillian extension
of $C(x)$, then $k$ satisfies the conditions of the lemma and hence is an admissible field.

\begin{rem}
In \cite{Si91a}, Theorem 4.1, it is shown that an algebraic extension of an admissible field is also an admissible field.
This is used in the next sections, where an algebraic extension of $k$ is (sometimes) needed in order to perform reduction.
\end{rem}

\subsection{Reduction of the normal variational equation}

Let $K$ denote a Picard-Vessiot extension for $[A]$.
In view of an application to variational equations of Hamiltonian systems,
we assume  that we know a rational solution of $[A]$.
This allows us to apply the gauge transformation given in (\ref{usualgauge}), yielding an equivalent  system of the form
$$\label{S_N}
A_N :=\left[\begin{array}{cccc} 0 & a_{12,n} & a_{13,n} & a_{14,n}
							\\ 0 & n_{11} & a_{14,n} & n_{12}
							\\ 0 & 0 & 0 & 0
							\\ 0 & n_{21} & -a_{12,n} & -n_{11} \end{array}\right] \quad\mathrm{and}\quad
							N:=\left[\begin{array}{cc}
							 n_{11} & n_{12}
						   \\n_{21} & -n_{11}\end{array}\right] \in \mathfrak{sl}(2,k)
$$

\begin{prop}
Using the notations adopted in section \ref{RedG2}, suppose that $\mathfrak{g}_N$ is abelian. Then there exists a symplectic  change of variable  $P_N\in Sp(4,k^\circ)$ that puts $A_N$ into the form $B:=P_N[ A_N ]$   given below:
\begin{eqnarray}
\nonumber\text{Case $\mathfrak{g}_N=\{0\}$ } 
	&:& B=a_{12}M_1 + a_{14}M_2 + a_{13}M_3\\				
\nonumber\text{Case $\mathfrak{g}_N=\mathfrak{g}_a$ }&:& B=a_{12}M_1 + a_{14}M_2 + a_{13}M_3 + a_{24} M_a \text{  with  }  \int a_{24} \notin k^{\circ}\\
\nonumber\text{Case $\mathfrak{g}_N=\mathfrak{g}_m$} &:& B=a_{12}M_1 + a_{14}M_2 + a_{13}M_3 + a_{22}M_m\\
\nonumber&&  \text{  with  } a_{22}=E'/E \text{  where  } E\notin k^{\circ}.
\end{eqnarray}
\noindent where   ${\small M_1 :=\left[\begin{array}{cccc} 0 &  1 & 0 & 0 \\ 0 & 0 & 0 & 0 \\ 0 & 0 & 0 & 0 \\ 0 & 0 & -1 & 0\end{array}\right]}$, 
${\small M_2 :=\left[\begin{array}{cccc} 0 & 0 & 0 & 1 \\ 0 & 0 & 1 & 0 \\ 0 & 0 & 0 & 0 \\ 0 & 0 & 0 & 0\end{array}\right]}$,  
${\small M_3 :=\left[\begin{array}{cccc} 0 & 0 & 1 & 0 \\ 0 & 0 & 0  & 0 \\ 0 & 0 & 0 & 0 \\ 0 & 0 & 0 & 0\end{array}\right]}$, \\
${\small M_a :=\left[\begin{array}{cccc} 0 & 0 & 0 & 0 \\ 0 & 0 & 0 & 1 \\ 0 & 0 & 0 & 0 \\ 0 & 0 & 0 & 0\end{array}\right]}$ 
and ${\small M_m :=\left[\begin{array}{cccc} 0 & 0 & 0 & 0 \\ 0 & 1 & 0 & 0 \\ 0 & 0 & 0 & 0 \\ 0 & 0 & 0 & -1\end{array}\right]}$.
\end{prop}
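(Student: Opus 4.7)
My plan is to lift the Kovacic reduction of the normal variational equation $[N]$ to a symplectic transformation of the full system $A_N$. Since $\mathfrak{g}_N$ is abelian by hypothesis, Proposition~\ref{Kovacic algorithm} applies to $[N]$ in one of its Cases (1), (2), or (3), yielding a matrix $Q=(q_{ij})_{1\le i,j\le 2}\in SL(2,k^\circ)$ such that $Q[N]$ lies in the canonical form $\{0\}$, $\mathfrak{g}_a(k^\circ)$, or $\mathfrak{g}_m(k^\circ)$, respectively.

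To promote $Q$ to a symplectic transformation of the full system, I would set
$$P_N:=\begin{pmatrix} 1 & 0 & 0 & 0 \\ 0 & q_{11} & 0 & q_{12} \\ 0 & 0 & 1 & 0 \\ 0 & q_{21} & 0 & q_{22} \end{pmatrix},$$
which acts as the identity on the ``tangential'' directions $e_1, e_3$ and as $Q$ on the ``normal'' directions $e_2, e_4$. A direct check of $P_N^T J P_N = J$, using $\det(Q)=1$ together with the coordinate pairing induced by $J$, confirms $P_N\in Sp(4,k^\circ)$.

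The key calculation is then $B:=P_N[A_N]=P_N^{-1}(A_N P_N - P_N')$. Done block-by-block, it yields: (a) the first column and third row of $B$ remain zero, because $P_N$ fixes $e_1,e_3$ while the first column and third row of $A_N$ vanish; (b) the $2\times 2$ submatrix at positions $\{2,4\}\times\{2,4\}$ is exactly $Q[N]$, the Kovacic-reduced form of $N$; (c) the entries $B_{12},B_{14}$ are $k^\circ$-linear combinations of $a_{12,n},a_{14,n}$ through the action of $Q$ on columns, while $B_{13}=a_{13,n}$; (d) the symplectic identities $B_{23}=B_{14}$ and $B_{43}=-B_{12}$ are inherited automatically from $B\in\mathfrak{sp}(4,k^\circ)$. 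Setting $a_{12}:=B_{12}$, $a_{13}:=B_{13}$, $a_{14}:=B_{14}$, one matches the three cases to the claimed forms: in Case $\mathfrak{g}_N=\{0\}$ the normal block vanishes and only $M_1,M_2,M_3$ survive; in Case $\mathfrak{g}_N=\mathfrak{g}_a$ the normal block contributes $a_{24}M_a$, with $\int a_{24}\notin k^\circ$, for otherwise a further gauge of the form $\exp(-\int a_{24}\, M_a)$ would annihilate this entry and collapse Case (2) of Proposition~\ref{Kovacic algorithm} into Case (1), contradicting $\mathfrak{g}_N^\circ=\mathbb{G}_a$; in Case $\mathfrak{g}_N=\mathfrak{g}_m$ it contributes $a_{22}M_m$, with $a_{22}=E'/E$ for $E=\exp(\int a_{22})\notin k^\circ$, by the analogous argument using Case (3).

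The main obstacle is essentially bookkeeping: verifying that the symplectic lift $P_N$ preserves the block shape of $A_N$ (zero first column, zero third row, and the symplectic couplings $B_{23}=B_{14}$, $B_{43}=-B_{12}$) so that no spurious entries appear outside the positions prescribed by $M_1,M_2,M_3$ and, in the non-trivial cases, $M_a$ or $M_m$. The conceptual reason this works is that $P_N$ is built respecting the splitting of the ambient $4$-dimensional space into the tangential plane spanned by the known rational solution and its symplectic complement, which is precisely the splitting used to put $A$ into the shape $A_N$ in (\ref{A_N}); hence a reduction of the NVE alone lifts without disturbance to a reduction of the whole $A_N$.
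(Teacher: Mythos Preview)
Your proposal is correct and follows essentially the same approach as the paper: you lift the Kovacic reduction matrix $Q\in SL(2,k^\circ)$ of the NVE to the same symplectic block matrix $P_N$ the paper writes down, and then read off the shape of $P_N[A_N]$ from the fact that $P_N$ is symplectic. Your write-up is in fact more detailed than the paper's (which simply invokes $P_N\in Sp(4,k^\circ)\Rightarrow P_N[A_N]\in\mathfrak{sp}(4,k^\circ)$ and leaves the block computation implicit), and your justification of $\int a_{24}\notin k^\circ$ and $E\notin k^\circ$ via collapse to a lower case is a welcome addition that the paper omits.
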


\begin{proof}
Keeping the notations of the previous section, we write $K_N$ for  the Picard-Vessiot  extension of $Y'=NY$.
Applying the Kovacic algorithm as in proposition \ref{Kovacic algorithm}, we obtain an algebraic extension
$k^\circ$ of $k$ and a reduction matrix $P\in SL(2,k^\circ)$ for $[N]$.
Let $P=(p_{i,j})$ be this reduction matrix.

We extend this to a gauge transformation on $A_{N}$:
$$
P_N := \left[\begin{array}{cccc}1 & 0 & 0 & 0 \\ 0 & p_{11} & 0 & p_{12} \\ 0 & 0 & 1 & 0 \\ 0 & p_{21} & 0 & p_{22}\end{array}\right] \quad\text{where} \left[\begin{array}{cc} p_{11} & p_{12} \\ p_{21} & p_{22}\end{array}\right]\text{ reduces } N.
$$
\noindent The classification in the proposition now follows from the fact that, as $P_{N}$ is (by construction) symplectic, we have $P_{N}[A]\in \mathfrak{sp}(4,k^\circ)$\footnote{
Or by the computation of $A:=P_N [A_N]=P^{-1}_N(A_N P_N - P'_N)$, using the fact that $P_N$ is symplectic and therefore  $P_N^{-1}=-J\; {}^tP_N \; J$ is easily computed}.
\end{proof}
\begin{rem}
These matrices $M_{i}$ satisfy simple relations:
\begin{eqnarray}\label{produitMi}
 M_{1}M_{2}=-M_{2}M_{1}=M_{3} \quad \hbox{\rm and } \; M_{i}M_{j}=0 \; \quad\hbox{\rm otherwise}.
\end{eqnarray}
In particular, $M_{i}^2=0$ so $\exp(M_{i})=Id + M_{i}$.
\end{rem}

In section \ref{background}, we have seen that the differential Galois group $G_N$ of the normal (variational) equation $N$  is a quotient of the differential Galois group $G$ of the whole system. If $\mathfrak{g}$ is abelian then $\mathfrak{g}_N$  is  abelian as well. The converse may not hold because a non-abelian group may have abelian quotients. We will now give criteria to detect obstructions to abelianity by means of  the coefficients of $A$ and a reduced form $R$ of $A$ when $G^\circ$ is abelian. 

\subsection{Maximal abelian subalgebras and effective abelianity conditions} \label{maximal-abelian}

At this point,
if $\mathfrak{g}_N$ is abelian, we see that $A\in \mathfrak{h}(k^\circ)$ and $\mathfrak{g}\subset\mathfrak{h}$, where $A$  and $ \mathfrak{h}$ can be chosen in 
	table  \ref{tableA} below. 
To find out whether $\mathfrak{g}$ is abelian, we will not need to compute $\mathfrak{g}$ completely:
we only need to show there is an abelian maximal subalgebra $\mathfrak{m}\subset\mathfrak{h}$ such that $\mathfrak{g}\subset\mathfrak{m}$.
 \begin{table}[h] 
\begin{center}
 \caption{Systems after reduction of the Normal Variational Equation}
\begin{tabular}{|c|c|c|c|}
\hline
$\small\mathfrak{g}_N$ & $\{0\}$ & $\small\mathfrak{g}_a$ & $\small\mathfrak{g}_m$\\
 \hline
 & & & \\
$\tiny A$ & $\begin{array}{c}a_{12}M_1+a_{14}M_2 \\+a_{13}M_3\end{array}$ & $\begin{array}{c} a_{12}M_1+a_{14}M_2 \\+a_{13}M_3+ a_{24}M_a\end{array}$ & $\begin{array}{c}a_{12}M_1+a_{14}M_2 \\+a_{13}M_3+ a_{22}M_m\end{array}$\\& & & \\
 \hline
& & & \\
$\tiny\mathfrak{h}$ & ${\tiny span_\mathbb{C}(M_1 , M_2 , M_3)}$ & ${\tiny span_\mathbb{C}(M_a , M_1 , M_2 , M_3)}$ & ${\tiny span_\mathbb{C}(M_m , M_1 , M_2 , M_3)}$\\& & & \\
 \hline
\end{tabular}
\label{tableA}
\end{center} 
\end{table}

\begin{lem}\label{Lema4}
 Let $\mathfrak{g}$ be the Lie algebra of $Y'=AY$ and let $\mathfrak{h}$ be a Lie algebra such that $\mathfrak{g}\subset\mathfrak{h}$
 and $A\in \mathfrak{h}(k^\circ)$. Then, $\mathfrak{g}$ is abelian if and only if there is some $P\in GL_{n}(k^\circ)$ such that $P[A]\in\mathfrak{m}(k^\circ)$, where $\mathfrak{m}$ is a maximal abelian subalgebra of $\mathfrak{h}$.

\end{lem}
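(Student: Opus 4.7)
The plan is to deduce both implications from the Kovacic--Kolchin reduction theorem (Corollary~\ref{kovacic-corollary}) together with the standard fact that if $A\in\mathfrak{l}(k)$ for some algebraic Lie subalgebra $\mathfrak{l}\subset\mathfrak{gl}_n$, then the Galois Lie algebra $\mathfrak{g}$ of $Y'=AY$ is contained in $\mathfrak{l}$ (Proposition~1.31 of \cite{PuSi03a}). Together with the observation, already used in Section~\ref{background}, that a gauge transformation with coefficients in an algebraic extension of $k$ leaves $G^\circ$ and hence $\mathfrak{g}$ unchanged, these two ingredients essentially suffice.

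For the \emph{``if''} direction, I would suppose that $P\in GL_n(k^\circ)$ satisfies $P[A]\in\mathfrak{m}(k^\circ)$ for some maximal abelian subalgebra $\mathfrak{m}$ of $\mathfrak{h}$. Since $k^\circ$ is algebraic over $k$, the equivalent system $Z'=P[A]Z$ has the same Lie algebra $\mathfrak{g}$ as the original one. Applied to the algebraic hull $\bar{\mathfrak{m}}$ of $\mathfrak{m}$, the cited fact yields $\mathfrak{g}\subset\bar{\mathfrak{m}}$. Because $\mathfrak{m}$ is abelian and commutation $[X,Y]=0$ is a Zariski-closed condition, $\bar{\mathfrak{m}}$ is still abelian, so $\mathfrak{g}$ is abelian.

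For the \emph{``only if''} direction, I would assume $\mathfrak{g}$ is abelian and apply Corollary~\ref{kovacic-corollary}, taking for $H$ a connected algebraic group whose Lie algebra is (the algebraic hull of) $\mathfrak{h}$, so that $A\in\mathfrak{h}(k^\circ)$ guarantees the hypothesis. The corollary produces an algebraic extension $k_1/k$ and some $P\in GL_n(k_1)$ with $R:=P[A]\in\mathfrak{g}(k_1)$. As $\mathfrak{g}$ is abelian and $\mathfrak{h}$ is finite-dimensional, $\mathfrak{g}$ is contained in at least one maximal abelian subalgebra $\mathfrak{m}$ of $\mathfrak{h}$ (existence by a dimension argument, or Zorn). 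Then $R\in\mathfrak{g}(k_1)\subset\mathfrak{m}(k_1)$, which sits inside $\mathfrak{m}(k^\circ)$ after enlarging $k^\circ$ to contain $k_1$ and the entries of $P$.

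The main delicate point is bookkeeping of the algebraic extension: the statement fixes $k^\circ$ a priori, whereas Corollary~\ref{kovacic-corollary} may need a larger algebraic extension in order to exhibit the reduction matrix $P$. This is harmless in view of the remark following Lemma~\ref{admissible-field} (algebraic extensions of admissible fields remain admissible) and of the fact that $\mathfrak{g}$ itself is invariant under algebraic gauge changes. A secondary subtlety is that the Tannakian containment result is stated for \emph{algebraic} Lie subalgebras, so one must pass to the Zariski closure of $\mathfrak{m}$; the easy fact that the closure of an abelian Lie algebra is again abelian removes this obstacle.
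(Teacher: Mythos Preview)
Your proposal is correct and follows essentially the same route as the paper: both directions rest on the Kolchin--Kovacic reduction theorem (Theorem~\ref{Kovacic} and Corollary~\ref{kovacic-corollary}), together with the containment $\mathfrak{g}\subset\mathfrak{h}$ when $A\in\mathfrak{h}(k^\circ)$, and the existence of a maximal abelian subalgebra containing a given abelian one. The paper's own proof is terser (three lines), invoking Theorem~\ref{Kovacic} directly and leaving implicit the two points you chose to make explicit: passing to the algebraic hull of~$\mathfrak{m}$ in the ``if'' direction, and the possible mismatch between the fixed $k^\circ$ and the extension $k_1$ produced by the corollary in the ``only if'' direction. Your treatment of both subtleties is sound and does not depart from the paper's intended argument.
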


\begin{proof}
Suppose $\mathfrak{g}$ is abelian. 
Then there will exist
some maximal abelian subalgebra  $\mathfrak{m}$ such that $\mathfrak{g}\subseteq \mathfrak{m}$. 
By Theorem \ref{Kovacic}, we know that there exists $P\in GL_n (k^\circ)$ such that
$P[A]\in\mathfrak{g}(k^\circ)\subseteq \mathfrak{m}(k^\circ)$, and we are done.
The converse follows from Theorem \ref{Kovacic} and its corollary.
\end{proof}

\begin{lem}
Keeping the notations from table  \ref{tableA}, 
we have:
\begin{enumerate}
\item if $\mathfrak{g}_N = \{0\}$, the maximal abelian subalgebras of $\mathfrak{h}$ are of the form $\mathrm{span}_{\mathbb{C}}(\alpha_1 M_1 + \alpha_2 M_2\,,\, M_3)$ where $(\alpha_1\,,\,\alpha_2)\in\mathbb{C}^2$.
\item if $\mathfrak{g}_N = \mathfrak{g}_a$, the maximal subalgebras are $\mathrm{span}_{\mathbb{C}}(M_2\,,\,M_3\,,\, M_a)$ and $\mathrm{span}_{\mathbb{C}}(M_a+\alpha_1 M_1 + \alpha_2 M_2\,,\, M_3)$ with $(\alpha_1\,,\,\alpha_2)\in \mathbb{C}^2$.
\item If $\mathfrak{g}_N = \mathfrak{g}_m$, the maximal abelian subalgebras of $\mathfrak{h}$ are of the form $\mathrm{span}_{\mathbb{C}}(M_m+\alpha_1 M_1 + \alpha_2 M_2\,,\, M_3)$ with $(\alpha_1\,,\,\alpha_2)\in \mathbb{C}^2$.
\end{enumerate}

\end{lem}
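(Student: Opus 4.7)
The plan is a direct structural computation inside the four-dimensional Lie algebra $\mathfrak{h}$ for each of the three cases, followed by a case analysis to enumerate the maximal abelian subalgebras. The whole argument rests on knowing the commutation table of $\mathfrak{h}$ and then solving $[X,Y]=0$ for a generic pair of elements.

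First I would extend the product relations~(\ref{produitMi}), which already give $[M_1,M_2]=2M_3$ and $[M_i,M_3]=0$ for $i=1,2$, by computing the missing brackets involving $M_a$ and $M_m$ directly from the matrix representations. A short matrix calculation yields
\[
[M_1,M_a]=M_2,\qquad [M_2,M_a]=[M_3,M_a]=0,
\]
\[
[M_1,M_m]=M_1,\qquad [M_2,M_m]=-M_2,\qquad [M_3,M_m]=0.
\]
In particular $M_3$ is central in every $\mathfrak{h}$, while $\mathrm{ad}(M_a)$ acts nilpotently (sending $M_1\mapsto M_2\mapsto 0$) and $\mathrm{ad}(M_m)$ acts semisimply with eigenvalues $+1,-1,0$ on $M_1,M_2,M_3$. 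These structural features dictate the classification below.

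Next, I would write a general element $X=a_1M_1+a_2M_2+a_3M_3\;(+a_aM_a\text{ or }a_mM_m)$ and an analogous $Y$, then expand $[X,Y]$ using the table above. In Case~1 the bracket reduces to $2(a_1b_2-a_2b_1)M_3$, so $[X,Y]=0$ is equivalent to $(a_1,a_2)$ being proportional to $(b_1,b_2)$, which yields the 2-dimensional maximal abelian family $\mathrm{span}_{\mathbb{C}}(\alpha_1M_1+\alpha_2M_2,M_3)$. In Case~2 the bracket equals $(a_1b_a-a_ab_1)M_2+2(a_1b_2-a_2b_1)M_3$; splitting according to whether some element has $a_1\neq 0$ separates the 3-dimensional $\mathrm{span}_{\mathbb{C}}(M_2,M_3,M_a)$ (the kernel of $\mathrm{ad}(M_a)$ in $\mathfrak{h}$) from the 2-dimensional family $\mathrm{span}_{\mathbb{C}}(M_a+\alpha_1M_1+\alpha_2M_2,M_3)$. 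In Case~3 the bracket equals $(a_1b_m-a_mb_1)M_1-(a_2b_m-a_mb_2)M_2+2(a_1b_2-a_2b_1)M_3$; when $a_m\neq 0$, the centralizer of $M_m+\alpha_1M_1+\alpha_2M_2$ in $\mathfrak{h}$ is exactly $\mathrm{span}_{\mathbb{C}}(M_m+\alpha_1M_1+\alpha_2M_2,M_3)$, and maximality follows.

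Maximality of each candidate is finally checked by recomputing its centralizer in $\mathfrak{h}$ and observing that it coincides with the candidate itself. The main subtlety I anticipate is that in Cases~2 and~3 the subalgebra $\mathrm{span}_{\mathbb{C}}(M_1,M_2,M_3)$ carries additional maximal abelian subalgebras of the Case~1 form, which are not listed in the statement. These are incompatible with the hypotheses $\mathfrak{g}_N=\mathfrak{g}_a$ (respectively $\mathfrak{g}_m$) that feed into Lemma~\ref{Lema4}: since the projection $\mathfrak{g}\twoheadrightarrow\mathfrak{g}_N$ is surjective onto a non-trivial algebra, every admissible $\mathfrak{g}$ must contain an element with non-zero $M_a$- (respectively $M_m$-) component, which rules out those spurious subalgebras. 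The delicate point in writing up the proof is to phrase this admissibility hypothesis precisely so that the classification matches the statement.
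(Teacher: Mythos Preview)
Your approach is essentially the same as the paper's: both compute the bracket table explicitly and then read off the maximal abelian subalgebras by a short case analysis. Your write-up is somewhat more systematic (generic $X,Y$ and solving $[X,Y]=0$), but the substance is identical. Your observation about the ``spurious'' Case~1-type subalgebras in Cases~2 and~3 is exactly right: the paper handles this the same way you propose, by silently restricting in its proof to ``abelian subalgebras of $\mathfrak{h}$ containing $M_a$'' (resp.\ $M_m$), which is justified later in Theorem~\ref{algebresmaximales} by the remark that $M_a$ (resp.\ $M_m$) must belong to $\mathfrak{g}$ whenever $\mathfrak{g}_N\neq\{0\}$.
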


\begin{proof}

The results in the Lemma are easily deduced from the multiplication tables given below

\begin{center}
\begin{tabular}{|c|c|}\hline
[Case $\mathfrak{g}_N =\mathfrak{g}_a$  ]  & [Case $\mathfrak{g}_N =\mathfrak{g}_m$ ] \\\hline

 \begin{tabular}{|c||c|c|c|c|}\hline
$\small [\, ,\,]$&$ \small M_a$  &$ \small M_1$ & $\small M_2$ & $\small M_3$\\ \hline\hline
$\small M_a$ &$\small  0 $ & $\small -M_2 $ & $\small 0$ & $\small 0$\\\hline
$\small M_1$ &$\small  M_2 $ &$\small 0 $ & $\small 2 M_3$ & $\small 0$\\\hline
$\small M_2$ &$\small 0$ &$\small -2M_3 $ & $\small 0$ & $\small 0$\\\hline
$\small M_3$ &$\small 0 $ &$\small 0 $ & $\small 0$ & $\small 0$\\\hline
\end{tabular}

&

 \begin{tabular}{|c||c|c|c|c|}\hline
$\small [\, ,\,]$&$\small M_m$  &$\small M_1$ & $\small M_2$ & $\small M_3$\\ \hline\hline
$\small M_m$ &$\small 0 $ & $\small -M_1$ & $\small M_2$ & $\small 0$\\\hline
$\small M_1$ &$\small M_1$ &$\small 0 $ & $\small 2 M_3$ & $\small 0$\\\hline
$\small M_2$ &$\small -M_2$ &$\small -2M_3 $ & $\small 0$ & $\small 0$\\\hline
$\small M_3$ &$\small 0 $ &$\small 0 $ & $\small 0$ & $\small 0$\\\hline
\end{tabular}
\\
\hline
\end{tabular}
\end{center}

\begin{enumerate}
\item If $\mathfrak{g}_N = 0$, in the table  we see  that $[M_1\,,\, M_2]=2 M_3 \neq 0$  whereas $[M_i \,,\, M_3] =0$ for $i\in\lbrace1,2\rbrace$. Therefore, any  abelian subalgebra of $\mathfrak{h}$ must necessarily be a subalgebra of an algebra of the type $\mathrm{span}_{\mathbb{C}} (\alpha_1 M_1 + \alpha_2 M_2 \,,\, M_3)$ where $(\alpha_1 \,,\, \alpha_2)\in\mathbb{C}$.

\item If $\mathfrak{g}_N = \mathfrak{g}_a$, in the table we see that $[M_1\,,\, M_2]=2 M_3 \neq 0$  and $[M_1\,,\, M_a] =-M_2\neq 0$ whereas $[M_i \,,\, M_3] =0$ for $i\in\lbrace1,2\rbrace$. Therefore, any  abelian subalgebra of $\mathfrak{h}$ containing $M_a$ must necessarily be a subalgebra of

\begin{itemize}
\item[-] either  an algebra of the type $\mathrm{span}_{\mathbb{C}} (M_a +\alpha_1 M_1 + \alpha_2 M_2 \,,\, M_3)$ where $(\alpha_1 \,,\, \alpha_2)\in\mathbb{C}$.
\item[-] or  $\mathrm{span}_{\mathbb{C}} (M_a\, , \,M_2 \,,\, M_3)$ where $(\alpha_1 \,,\, \alpha_2)\in\mathbb{C}$.
\end{itemize}
\item If $\mathfrak{g}_N = \mathfrak{g}_m$: since the algebra $\mathrm{span}_{\mathbb{C}}(M_m\,,\, M_1 \,,\, M_2)$ is not abelian and has no abelian subalgebras 
(other than the monogenous ones),
all abelian subalgebras of $\mathfrak{h}$ containing $M_m$ must be of the type $\mathrm{span}_{\mathbb{C}}(M_m + \alpha_1 M_1 + \alpha_2 M_2 \,,\, M_3)$ where $(\alpha_1 \,,\, \alpha_2)\in\mathbb{C}^{2}$.
\end{enumerate}
\end{proof}

\begin{rem}\label{conjugate-algebras}
In the above lemma, two cases may be simplified. In the second part of case $(2)$, $\mathrm{span}_{\mathbb{C}}(M_a+\alpha_1 M_1 + \alpha_2 M_2\,,\, M_3)$  is conjugate to $\mathrm{span}_{\mathbb{C}}(M_a+\alpha_1 M_1 \,,\, M_3)$  (via $P=\mathrm{Id}+\alpha_2 M_1$). 
In case $(3)$, $\mathrm{span}_{\mathbb{C}}(M_m+\alpha_1 M_1 + \alpha_2 M_2\,,\, M_3)$ is conjugate to 
$\mathrm{span}_{\mathbb{C}}(M_m\,,\, M_3)$ (via $P=\mathrm{Id}+\alpha_1 M_1 - \alpha_2 M_2$).
\end{rem}

Before we proceed to our main result, we need a lemma about the structure of solution matrices.

\begin{lem}\label{tableU}
Let $Y'=AY$ with $A\in\mathfrak{sp}(4,k^\circ)$ as given in 
	table \ref{tableA}  page \pageref{tableA}. 
Assume that $\mathfrak{g}_N$ is abelian.
Then, depending  on $\mathfrak{g}_N$, a fundamental (symplectic) solution matrix $U\in Sp(4,K)$ can be chosen in the table below:
 \begin{center}
   \begin{tabular}{|c|c|c|c|}
    \hline
    $\mathfrak{g}_N $ & $\mathfrak{g}_{Id}=\{0\}$ & $ \mathfrak{g}_a $ & $\mathfrak{g}_m $ \\
     \hline
      & & & \\
      $U$ &
      $\small\left[\begin{array}{cccc}
      		1 & \Omega_1 & \Omega_3 & \Omega_2 \\
		0 & 1 & \Omega_2 & 0 \\
		0 & 0 & 1 & 0\\
		0 & 0 &  -\Omega_1 & 1
		\end{array}\right]$
	&
	$\small\left[\begin{array}{cccc}
		1 & \Omega_1 & \Omega_3 & \Omega_2+L  \Omega_1\\
		0 & 1 & \Omega_2 & L \\
		0 & 0 & 1 & 0\\
		0 & 0 &  -\Omega_1 & 1
		\end{array}\right] $
	&
	$\small\left[\begin{array}{cccc}
		1 &  E \Omega_1 & \Omega_3 & \frac{\Omega_2}{E}\\
		0 & E & \Omega_2 & 0 \\
		0 & 0 & 1 & 0\\
		0 & 0 & -\Omega_1 & \frac{1}{E}
		\end{array}\right] $\\
         & &\small $L:=\int a_{24}\notin k^\circ$ & \small $ E:=\exp(\int a_{22})\notin k^\circ$\\
         & & with $a_{24}\in k^\circ$ & with $a_{22} \in k^\circ$\\
  \hline
           \end{tabular}
 \end{center}

\end{lem}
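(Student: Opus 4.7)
The plan is to construct the fundamental matrix $U$ column by column in each case, exploiting the fact that both the first column of $A$ and its third row vanish. The first column of $A$ being zero immediately gives that $(1,0,0,0)^{T}$ is a constant solution of $[A]$, which will become the first column of $U$. Similarly, the vanishing third row of $A$ forces the $(3,3)$ entry of any fundamental matrix to be a non-zero constant, which we normalize to $1$; together with symplecticity, this forces the third column of $U$ to have the form $(\Omega_3,\Omega_2,1,-\Omega_1)^T$ for some entries yet to be determined.

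For the second and fourth columns, I would lift the reduced NVE fundamental matrix of Proposition~\ref{Kovacic algorithm}. The NVE submatrix of $A$ (rows and columns $2,4$) is in reduced form, so its fundamental matrix $U_{N}$ is the identity when $\mathfrak{g}_{N}=\{0\}$, the unipotent matrix $\bigl(\begin{smallmatrix}1 & L\\ 0 & 1\end{smallmatrix}\bigr)$ with $L'=a_{24}$ (and $L\notin k^{\circ}$) when $\mathfrak{g}_{N}=\mathfrak{g}_{a}$, and $\mathrm{diag}(E,1/E)$ with $E'=a_{22}E$ (and $E\notin k^{\circ}$) when $\mathfrak{g}_{N}=\mathfrak{g}_{m}$. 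The entries of these columns in the first row are then forced by equating $U'=AU$ on rows $1$, $2$ and $4$: this gives equations such as $\Omega_{1}'=a_{12}$ (cases $\{0\}$ and $\mathfrak{g}_{a}$) or the Risch equations $\Omega_{1}'=a_{12}-a_{22}\Omega_{1}$ and $\Omega_{2}'=a_{14}+a_{22}\Omega_{2}$ (case $\mathfrak{g}_{m}$), and analogous equations for $\Omega_{2}$ in the $\mathfrak{g}_{a}$ case. The third-column entry $\Omega_{3}$ is then determined by the first row of $AU$ along column $3$, giving $\Omega_{3}'=a_{13}+a_{12}\Omega_{2}-a_{14}\Omega_{1}$ in all three cases.

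The verification is then split into two parts. First, one checks that $U'=AU$ entry by entry (or column by column), which is a routine computation in each of the three cases; the critical point is \emph{consistency}, namely that the two expressions for $\Omega_{1}'$ (and $\Omega_{2}'$) obtained from columns $2,4$ on one hand and from column $3$ via row $4$ (resp.\ row $2$) on the other hand agree. This coherence holds precisely because $A$ belongs to $\mathfrak{h}(k^\circ)$ as given by Table~\ref{tableA}, so no spurious terms appear. Second, one verifies $U^{T}JU=J$ by direct $4\times 4$ multiplication; alternatively, since $A\in\mathfrak{sp}(4,k^{\circ})$ the matrix $U^{T}JU$ is a matrix of constants, and it suffices to check its value at a single symbolic specialization (e.g.\ by inspecting the columns, where the cancellations $\Omega_{2}\cdot(-\Omega_{1})+(-\Omega_{1})\cdot(-\Omega_{2})=0$ and the like make the identity obvious).

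The part requiring the most attention is the $\mathfrak{g}_{m}$ case, where $\Omega_{1}$ and $\Omega_{2}$ are no longer primitives but solutions of inhomogeneous linear ODEs: one must keep track of the factors of $E$ and $1/E$ appearing in column~$2$ and column~$4$, and check that the product with $\Omega_{1}$ (resp.\ $\Omega_{2}$) in the first row produces exactly the entries $E\Omega_{1}$ and $\Omega_{2}/E$ shown in the statement. The mild subtlety is that these entries do belong to the Picard--Vessiot field $K$ (by construction of $K$ as containing all solutions of $[A]$), but not to $k^{\circ}$ when $G^{\circ}$ is non-trivial, which is why the table is stated with $L\notin k^{\circ}$ and $E\notin k^{\circ}$. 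Once these consistencies are established, the lemma follows.
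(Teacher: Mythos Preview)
Your proposal is correct and parallels the paper's argument closely. The paper simply invokes Proposition~\ref{FMS} from the appendix, which derives the general shape
\[
U=\begin{pmatrix}
1 & q_{11}\Omega_1 - q_{21}\Omega_2 & \Omega_3 & q_{12}\Omega_1 + q_{22}\Omega_2\\
0 & q_{11} & \Omega_2 & q_{12}\\
0 & 0 & 1 & 0\\
0 & q_{21} & -\Omega_1 & q_{22}
\end{pmatrix}
\]
for any system of the form~(\ref{PlA}), using the symplectic relations $\omega(U_i,U_j)=J_{ij}$ \emph{a priori} to pin down the first-row entries, and then specializes $Q=(q_{ij})$ to the reduced NVE fundamental matrices $\mathrm{Id}$, $\bigl(\begin{smallmatrix}1 & L\\ 0 & 1\end{smallmatrix}\bigr)$, $\mathrm{diag}(E,1/E)$ of Proposition~\ref{Kovacic algorithm}. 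You instead build the columns directly from the equation $U'=AU$ and check $U^{T}JU=J$ afterwards. The two routes are equivalent; the paper's ordering has the mild advantage that symplecticity fixes the first-row entries of columns~2 and~4 without any consistency check, whereas your route needs the coherence verification you describe (which indeed holds).
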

\begin{proof}
This table is a direct consequence of Proposition \ref{FMS} given in the appendix (page \pageref{GSS}).
\end{proof}

We now state our main result. This next theorem gives a reduction algorithm which puts, if possible, $A$ into reduced form. In what follows, the term
"P is a reduction matrix" means that $Lie(P[A])$ is abelian (so, strictly, the system may be only partially reduced but this is enough for us).

\begin{thm}\label{algebresmaximales}
Consider a differential system $Y'=AY$, where $A$ is chosen in table \ref{tableA} page \pageref{tableA}. 
Let $\mathfrak{g}$ be the Lie algebra of its differential Galois group.
Then, $\mathfrak{g}$  is abelian if and only if
\begin{description}
\item[1. Case  $\mathfrak{g}_N = \{0\}$:]  one of the two assertions below holds.
\begin{enumerate}
\item There exist $y_{1},y_{2}\in k^\circ$ such that $y_{1}'=a_{12}$ and $y_{2}'=a_{14}$.
	In that case $\mathfrak{g}\subset \mathrm{span}_{\mathbb{C}}(M_3)$ and $P:=\mathrm{Id} + y_{1}M_{1} + y_{2}M_{2}$ is a reduction matrix.
\item
There exist $(\alpha_1\,,\,\alpha_2)\in\mathbb{C}^2 \setminus(0,0)$
 such that the equation
$ y_{1}'=\alpha_2 a_{12} - \alpha_1 a_{14}$
  has a solution $y_{1}\in k^\circ$. In that case $\mathfrak{g}\subset \mathrm{span}_{\mathbb{C}}(\alpha_2 M_1+\alpha_1 M_2 \,,\,M_3)$ 
  and $P:=\mathrm{Id} +\frac{1}{2\alpha_1\alpha_2} y_1 \left(\alpha_{1}M_{1} -\alpha_{2}M_{2}\right)$ is a reduction matrix.
\end{enumerate}

\item[2. Case  $\mathfrak{g}_N = \mathfrak{g}_a$:] one of the two assertions below holds.

	\begin{enumerate}
	\item There exists $y_{1}\in k^\circ$ such that $y_{1}'=a_{12}$. In that case $\mathfrak{g}\subset\mathrm{span}_{\mathbb{C}}(M_2,\,M_3\,,\, M_a)$ and $P:=\mathrm{Id}+ y_1 M_1$ is a reduction matrix.
	\item There exist $(\alpha_1\,,\,\alpha_2)\in\mathbb{C}^2\setminus (0,0)$ and $y_1 , y_2 \in k^\circ$ such that
			\begin{equation}\label{L2}
					\left\{\begin{array}{ccccccc}
					y'_1 & =& a_{12}& -& \alpha_1 a_{24}&&\\
					y'_2 &=& a_{14} & - & a_{24}y_1 &-& \alpha_2 a_{24}.
					\end{array}\right.
			\end{equation}
			In that case, $\mathfrak{g}\subset \mathrm{span}_{\mathbb{C}}(M_a + \alpha_1 M_1 +\alpha_2 M_2\,,\, M_3)$
			and $P:=\mathrm{Id}+ y_1 M_1+ y_2 M_2$ is a reduction matrix.
\end{enumerate}
\item[3. Case  $\mathfrak{g}_N = \mathfrak{g}_m$:]  the system of Risch equations
	\begin{equation}\label{Risch2}
		\left\{ \begin{array}{ccccc}
					 y'_1 &=&  -a_{22}y_1 & + &  a_{12}\\
					 y'_2 &=& a_{22}y_2 &+&  a_{14}
			\end{array}  \right.
 	\end{equation}

has a non-trivial solution in $(k^\circ)^2$. In that case $\mathfrak{g}\subset\mathrm{span}_{\mathbb{C}}(M_m \,,\,M_3)$ and 
$P:=\mathrm{Id} + y_1  M_1 + y_2 M_2$ is a reduction matrix.
\end{description}
\end{thm}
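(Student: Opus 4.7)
The proof rests on Lemma \ref{Lema4}: $\mathfrak{g}$ is abelian if and only if some gauge transformation $P\in GL_4(k^\circ)$ sends $A$ into $\mathfrak{m}(k^\circ)$ for $\mathfrak{m}$ a maximal abelian subalgebra of $\mathfrak{h}$. The preceding lemma classifies these $\mathfrak{m}$, and Remark \ref{conjugate-algebras} lets us simplify the list in Cases 2 and 3 to the canonical representatives.

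My plan is to make, in every case, the uniform symplectic ansatz $P = \mathrm{Id} + y_1 M_1 + y_2 M_2$ with $y_1,y_2\in k^\circ$ to be determined. This $P$ is symplectic because $(y_1 M_1+y_2 M_2)^2=0$ by the product rules \eqref{produitMi}, so $P=\exp(y_1 M_1+y_2 M_2)\in Sp(4,k^\circ)$ and $P^{-1}=\mathrm{Id}-y_1 M_1-y_2 M_2$. Expanding $P[A]=P^{-1}(AP-P')$ with the explicit matrix products of the $M_i$'s (and of $M_a$ or $M_m$ in the relevant cases) shows that the $M_1$- and $M_2$-components of $P[A]$ are $a_{12}-y_1'$ and $a_{14}-y_2'$ respectively, adjusted by correction terms arising from $[M_1,M_a]$, $[M_2,M_a]$, $[M_1,M_m]$, $[M_2,M_m]$. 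Requiring $P[A]$ to lie in a specific maximal abelian subalgebra from the previous lemma then translates into the first-order linear ODEs of the theorem: a single limited integration (Case 1, first sub-case), a parametrized limited integration with a one-dimensional family of directions $(\alpha_1,\alpha_2)$ (Case 1 second sub-case, and Case 2), or the coupled Risch system \eqref{Risch2} (Case 3).

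The sufficiency direction ($\Leftarrow$) then follows immediately: solutions $y_1,y_2$ of the stated ODEs yield a $P$ with $P[A]\in\mathfrak{m}(k^\circ)$ for an abelian $\mathfrak{m}$, hence $\mathfrak{g}\subset\mathfrak{m}$ is abelian. For the necessity direction ($\Rightarrow$), Lemma \ref{Lema4} provides some $P_0\in H(k^\circ)$ with $P_0[A]\in\mathfrak{m}(k^\circ)$, but not necessarily of the ansatz form. I will argue that one may decompose $P_0=P\cdot Q$ with $Q$ in the connected abelian group $M$ corresponding to $\mathfrak{m}$ and $P$ of the ansatz form; the gauge action of $Q$ preserves $\mathfrak{m}(k^\circ)$ (since $M$ is abelian, $\mathrm{Ad}(Q)$ is trivial on $\mathfrak{m}$ and $Q^{-1}Q'\in\mathfrak{m}$), so $P[A]\in\mathfrak{m}(k^\circ)$ as well, ensuring that the ansatz ODEs admit a solution. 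The fundamental matrix description of Lemma \ref{tableU} is a useful sanity check here: the iterated integrals $\Omega_1,\Omega_2,\Omega_3,L,E$ and their Galois orbits make the shape of the reducing $P$ transparent.

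The main obstacle is the bookkeeping of matrix products in Case 3, where $M_m$ interacts nontrivially with $M_1$ and $M_2$: products such as $M_m M_1$ lie in $\mathfrak{sp}(4)$ but outside $\mathfrak{h}$, and one must check that all such spurious contributions cancel in the full expression $P^{-1}(AP-P')$, leaving the result in $\mathfrak{h}(k^\circ)$. A secondary, but more routine, difficulty is Case 2, in which two distinct subfamilies of maximal abelian subalgebras (namely $\mathrm{span}(M_2,M_3,M_a)$ and $\mathrm{span}(M_a+\alpha_1 M_1+\alpha_2 M_2,M_3)$) lead to separate ODE derivations and thus separate sub-cases (2a) and (2b) in the statement.
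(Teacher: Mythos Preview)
Your approach is correct but genuinely different from the paper's.  The paper argues the necessity direction by Galois descent on the fundamental solution matrix of Lemma \ref{tableU}: for each maximal abelian $\mathfrak{m}$ it writes a general $\sigma\in\exp(\mathfrak{m})$, computes $\sigma(U)=U\sigma$ entrywise, and reads off which combinations of $\Omega_1,\Omega_2,L,E$ are fixed by $G^\circ$ (e.g.\ $\alpha_2\Omega_1-\alpha_1\Omega_2$ in Case~1, $\Omega_1-\alpha_1 L$ and $\Omega_2-\alpha_2 L+\tfrac12\alpha_1 L^2$ in Case~2b, $\Omega_1-\alpha_1/E$ and $\Omega_2-\alpha_2 E$ in Case~3).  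Those invariants, having derivatives in $k^\circ$, lie in $k^\circ$ and become the $y_i$.  You instead compute $P[A]$ for the uniform ansatz $P=\mathrm{Id}+y_1M_1+y_2M_2$ and match coefficients; the $E_{12},E_{43},E_{14},E_{23}$ cross–terms coming from $M_mM_i$ in Case~3 do recombine into $-a_{22}y_1M_1+a_{22}y_2M_2$, giving precisely \eqref{Risch2}, so your flagged obstacle dissolves as you anticipated.  The paper's route makes the Galois-theoretic meaning of the $y_i$ transparent (they are specific $G^\circ$-invariants), while yours is more algorithmic and reusable: once you know the bracket table, the ODEs drop out mechanically.

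The one place where your outline is thin is the factorisation $P_0=P\cdot Q$ with $Q\in M(k^\circ)$ and $P$ of the ansatz shape.  This is not automatic: in Case~2 the group $H$ contains matrices with independent $(1,4)$- and $(2,3)$-entries (because $M_1M_a=E_{14}$, $M_aM_1=-E_{23}$), so $H$ is not simply $\mathrm{Id}+\mathfrak{h}$.  What you actually need is that the ansatz family is a transversal for $M$ in $H$, i.e.\ $H(k^\circ)=\{\mathrm{Id}+y_1M_1+y_2M_2\}\cdot M(k^\circ)$.  This does hold in every case (for instance, in Case~2a one checks that any $P_0\in H(k^\circ)$ factors as $(\mathrm{Id}+y_1M_1)\cdot(\mathrm{Id}+bM_2+cM_3+dM_a)$ once one writes $H$ explicitly as $\{$upper-unitriangular symplectic matrices with $(1,4)-(2,3)=(1,2)\cdot(2,4)\}$), but you should carry out this coset check explicitly in each of the four sub-cases rather than leave it as an assertion.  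Once that is done, your argument that $Q[{-}]$ preserves $\mathfrak{m}(k^\circ)$ (because $\mathrm{Ad}(Q)|_{\mathfrak{m}}=\mathrm{id}$ and $Q^{-1}Q'\in\mathfrak{m}(k^\circ)$) closes the necessity direction cleanly.
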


\begin{proof}(Theorem)
We first show the case $\mathfrak{g}_N = \{0\}$ in detail, together with a reduction procedure. The next cases will then
be easier to follow.
\begin{description}
\item[1. Case $\mathfrak{g}_N = 0$:]
In this case (by lemma  \ref{tableU}), we have $A=a_{12} M_1 + a_{14} M_2 + a_{13}M_3$.
Lemma \ref{tableU} shows that the system $Y'=AY$ has a fundamental solution matrix of the form
$$U= \mathrm{Id} + \Omega_1 M_1 + \Omega_2 M_2 + \Omega_3 M_3$$

Assume now that $\mathfrak{g}$ is abelian. By Lemma \ref{Lema4}  This is equivalent to
$$\mathfrak{g}\subset \mathfrak{m}_{\alpha_{1},\alpha_{2}} := \mathrm{span}_{\mathbb{C}}(\alpha_1 M_1 + \alpha_2 M_2\,,\,M_3)
\; \hbox{\rm with }\; (\alpha_1\,,\,\alpha_2)\in\mathbb{C}^2.$$
We pick $\sigma\in \exp(\mathfrak{g} ) \subset G^\circ$ and study its action on $U$.
Using relations (\ref{produitMi}) and abelianity of $\mathfrak{m}_{\alpha_{1},\alpha_{2}}$, we see that
$\sigma=\mathrm{Id} + \beta_1(\alpha_1 M_1 + \alpha_2 M_2) + \beta_3 M_3$ for some $\beta_{1},\beta_{3}\in \mathbb{C}$.

By a quick calculation using relations (\ref{produitMi})) the action of $\sigma$ over $U$ is 
 given by $ \nonumber\sigma(U)=U.\sigma = $
 $$
 \mathrm{Id} + (\Omega_1 + \beta_1\alpha_1)M_1 +
 	(\Omega_2 + \beta_1 \alpha_2)M_2 +
(\Omega_3 + \beta_3 + \beta_1(\alpha_2 \Omega_1  - \alpha_1\Omega_2 ))M_3.
$$
Identifying this with $\sigma$ applied to coefficients of $U$, we infer that
$$\left\{ \begin{array}{ccc}
	\sigma(\Omega_{1}) & = & \Omega_{1}+\alpha_{1}\beta_{1} \\
	\sigma(\Omega_{2}) & = & \Omega_{2}+\alpha_{2}\beta_{1}
  \end{array}\right .
 $$
 
 If $\alpha_{1}=\alpha_{2}=0$, this shows that $\Omega_{1}$ and $\Omega_{2}$ are fixed by
 $\exp(\mathfrak{g})$ hence by $G^\circ$ (e.g \cite{FuHa91a}, proof of proposition 8.33) so they are algebraic. Now we also have
 $\Omega_{i}'\in k^\circ$ hence $\Omega_{i}\in k^\circ$ (for $i=1,2$).\\
 We introduce the reduction matrix $P:=\mathrm{Id} + \Omega_{1}M_{1} + \Omega_{2}M_{2} \in GL_{4}(k^\circ)$
 and easily check that $U=P.\left(\mathrm{Id} +\Omega_{3}M_{3}\right)$.
 Hence, 
the new system has matrix $P[A]=(a_{13} + a_{12} \Omega_2 - a_{14} \Omega_1 ) M_{3}$ and
 	$\mathfrak{g}\subset \mathrm{span}_{\mathbb{C}}(M_3)$.
	
If now $(\alpha_{1},\alpha_{2})\neq (0,0)$, we let $f:=\alpha_{2}\Omega_{1}-\alpha_{1}\Omega_{2}$. It is left fixed
by $G^\circ$ and $f'=\alpha_{2} a_{12} -\alpha_{1} a_{14} \in k^\circ$ so $f \in k^\circ$ as above.
We use the reduction matrix
$P:=\mathrm{Id} +\frac{1}{2\alpha_1\alpha_2} f \left(\alpha_{1}M_{1} -\alpha_{2}M_{2}\right) \in GL_{4}(k^\circ)$
and obtain $$P[A]=\beta\left( \alpha_1 M_1 + \alpha_2 M_2\right) + (a_{13}-2 f \beta) M_3 
\in \mathfrak{m}_{\alpha_{1},\alpha_{2}}(k^\circ)
\quad \hbox{\rm where}\quad \beta = \frac{\alpha_2 a_{12} + \alpha_1 a_{14}}{2\alpha_1 \alpha_2} $$
 so $\mathfrak{g}\subset \mathfrak{m}_{\alpha_{1},\alpha_{2}}$ and the system is again (partially) reduced with an abelian Lie algebra.

Conversely, if one of the conditions of the theorem holds, the reduction (gauge transformation) matrices $P$ given above
send $A$ to a $k^\circ$-point of $\mathrm{span}_{\mathbb{C}}(M_3)$ or $\mathfrak{m}_{\alpha_{1},\alpha_{2}}$
respectively, showing that $\mathfrak{g}$ is abelian.

\item[2. Cases $\mathfrak{g}_N = \mathfrak{g}_a$:]
In this case $\mathfrak{g}$ must always contain $M_a$ otherwise we would be in the case $\mathfrak{g}_n = 0$. By lemma \ref{tableU}, we have
 $A=a_{12} M_1 + a_{14}M_2 + a_{13} M_3 + a_{24} M_a$. The above lemma shows that the system $Y'=AY$ has a fundamental matrix of the form
$$U=\mathrm{Id}+\Omega_1 M_1 + \Omega_2 M_2 +\Omega_3 M_3 + L M_a -L\Omega_1 M_1 M_a .$$
Assume now that $\mathfrak{g}$ is abelian. This is equivalent (Lemma \ref{Lema4})  to either
$\mathfrak{g}\subset \mathfrak{m}:=\mathrm{span}_\mathbb{C}(M_2\,,\, M_3\,,\, M_a)$ or  $\mathfrak{g}\subset \mathfrak{m}_{\alpha_1 ,\alpha_2}:=\mathrm{span}_\mathbb{C}(M_a + \alpha_1 M_2 + \alpha_2 M_1\,,\, M_3\,)$:
\\
\begin{itemize}
\item[(a)] $\mathfrak{g}\subset \mathfrak{m}:=\mathrm{span}_\mathbb{C}(M_2\,,\, M_3\,,\, M_a)$:  pick $\sigma\in \exp(\mathfrak{g} ) \subset G^\circ$ and study its action on $U$.
Using relations (\ref{produitMi}) and abelianity of $\mathfrak{m}$, we see that
	$\sigma=\mathrm{Id} + \beta_2 M_2 + \beta_3 M_3 + \beta M_a$ for some $\beta_{2},\beta_{3},\beta \in \mathbb{C}$. Computing the action of $\sigma$ over $U$  we obtain $U\sigma = $
$$\mathrm{Id} + \Omega_1 M_1 + (\beta_2 + \beta_1 \Omega_1 + \Omega_3)M_3 + (\Omega_2 + \beta_2)M_2 + (\beta \Omega_1 - L\Omega_1)M_1 M_a + (\beta + L) M_a$$
Identifying this with $\sigma$ applied to coefficients of $U$, we infer that  $\sigma(\Omega_1)=\Omega_1$ which is equivalent to
	$\int a_{12}=:\Omega_1 \in k^\circ$. The reduction matrix obtained this way  is
$$P=Id + \Omega_1 M_1$$
and 
 $P[A] = (a_{13} - 2\, a_{14}\,\Omega_1 + a_{24}\,\Omega^{2}_1)M_1 + (a_{14}-a_{24}\,\Omega_1 )M_2 + a_{24}\,M_{a}$.
\\
\item[(b)] $\mathfrak{g}\subset \mathfrak{m}:=\mathrm{span}_{\mathbb{C}}( \alpha_1 M_1 + \alpha_2 M_2 + M_a\,,\,M_3)$:
 pick $\sigma\in \exp(\mathfrak{g} ) \subset G^\circ$ with
 	$\sigma = \exp(\beta_1(M_a +\alpha_1 M_1 + \alpha_2 M_2) + \beta_3 M_3 )$. We compute $U.\sigma$ and compare this with entries of $\sigma(U)$
	to obtain:
$$\left\{ \begin{array}{ccc}
	\sigma(\Omega_{1}) & = & \Omega_{1}+\alpha_{1}\beta_{1} \\
	\sigma(L)&=& L+\beta_1\\
	\sigma(\Omega_{2}) & = & \Omega_{2}+\alpha_{2}\beta_{1}- L\beta_1 \alpha_1 -\frac{\beta^2_1 \alpha_1}{2}
  \end{array}\right.
 $$
Let $y_{1}:= \Omega_{1}-\alpha_{1}L$; the first two relations show that $y_{1}$ is fixed by all $\sigma$. Hence
	$y_{1}\in k^\circ$ and $y_{1}$ is a solution in $k^\circ$ of $y_{1}'=a_{12}-\alpha_{1}a_{24}$.
Similarly (but with a few lines of calculation), letting $y_{2}:=\Omega_{2}-\alpha_{2}L+\frac12 \alpha_{1} L^2$
	(i.e $y_{2}'=a_{14}-a_{24}y_{1}-\alpha_{2}a_{24}$), the third condition shows that $y_{2}\in k^\circ$
The reduction matrix obtained this way  is
$$P_{\mathfrak{m}}=\mathrm{Id} + y_1 M_1 + y_2 M_2\quad\text{ and }\quad P_{\mathfrak{m}}[A]= a_{24}(M_a + \alpha_1 M_2 + \alpha_2 M_1)+h M_3 $$
with $h = y_{2}' y_{1} - y_{1}' y_{2} + a_{13}$.
\end{itemize}

\item[3. Case  $\mathfrak{g}_N=\mathfrak{g}_m$]:
Again, $M_m$ must belong to $\mathfrak{g}$ (otherwise we would be in the case $\mathfrak{g}_N = 0$).
 By Lemma \ref{tableU} we have $A=a_{12} M_1 + a_{14} M_2 + a_{13} M_3 + a_{22} M_m$.
 Lemma \ref{tableU} shows that the system $Y'=AY$ has a fundamental matrix of the form
 $$
 U:=\left[\begin{array}{cccc} 1 & E\Omega_1  & \Omega_3 & \frac{\Omega_2}{E}\\
 0& E & \Omega_2 & 0 \\
 0 & 0 & 1 & 0\\
 0 & 0 & -\Omega_1 & \frac{1}{E}\end{array}\right].
 $$

Assume now that $\mathfrak{g}$ is abelian:  by Lemma \ref{Lema4}, this is equivalent to $\mathfrak{g}\subset \mathrm{span}_\mathbb{C}(\alpha_1 M_1 + \alpha_2 M_2 +M_m, M_3)$ for some $\alpha_1 ,\alpha_2 \in\mathbb{C}$.
We pick a $\sigma\in \exp(\mathfrak{m})\subset G^{\circ}$: notice that  since $M^{2}_m\neq 0$  the form of $\sigma$ will be
$$
M_{\sigma}:=\left[\begin{array}{cccc} 1 & \alpha_1 (\mathrm{e}^{c} -1) & \alpha_1 \,\alpha_2 \,(\mathrm{e}^{c} -\mathrm{e}^{-c} -2c) + \beta & \alpha_2 \left(1-\mathrm{e}^{-c}\right)\\
0 & \mathrm{e}^{c} & \alpha_2 (\mathrm{e}^{c}- 1 )  & 0 \\
0 & 0 & 1 & 0\\
0 & 0 & -\alpha_1 (\mathrm{e}^{c} -1)  & \mathrm{e}^{-c}\end{array}\right]
$$
with $ c \in\mathbb{C}^{\star}, \beta,\alpha_i \in\mathbb{C}$ for $i=1,2$.  Computing $\sigma(U)=U.M_{\sigma}$ we obtain the following relations linking  $E, \Omega_1 ,\Omega_2$ and the constants linked to $\sigma$:
 $$\left\{ \begin{array}{ccc}
 		\sigma(E) &=&\mathrm{e}^{c} E \\
	\sigma(\Omega_{1}) & = & \Omega_{1} - \frac{\alpha_{1}}{E}+\frac{\alpha_{1}}{E \mathrm{e}^{c}} \\
	\sigma(\Omega_{2}) & = & \Omega_{2}+\alpha_{2}\mathrm{e}^{c} E     - E \alpha_2.
  \end{array}\right.
 $$
Again, it is easily checked that $\Omega_{1}-\frac{\alpha_{1}}{E}$ and $\Omega_{2} -\alpha_{2} E$ are fixed by all $\sigma$.
This is equivalent with saying that the following equations
  $$\left\{ \begin{array}{ccc}
y'_1 & = & -a_{22} y_1  + a_{12}  \\
y'_2 & = & a_{22} y_2  +a_{14}
  \end{array}\right.
  $$
have solutions $y_{1},y_{2} \in k^\circ$.
The reduction matrix obtained this way  is
$$P_{\mathfrak{m}}=\mathrm{Id} + y_1  M_1 + y_2 M_2  \quad\text{ and }\quad P_{\mathfrak{m}}[A]= (a_{12} y_2+a_{13}+a_{14}y_1)M_3 + a_{22}M_m.$$

\end{description}
\end{proof}

\begin{rem}
In the case $\mathfrak{g}_N = \mathfrak{g}_m$, we actually proved that $\mathfrak{g}$ is a subalgebra of $\mathrm{span}_{\mathbb{C}}(M_3\,,\, M_m)$ instead of the maximal abelian subalgebra  $\mathfrak{m} =\mathrm{span}_{\mathbb{C}}(\alpha_1 M_1 + \alpha_2 M_2 + M_m\,,\, M_3)$. Indeed, if we pick a differential system $[B]\:\,Y' = BY$ with $B:=f(x)\,(\alpha_1 M_1 + \alpha_2 M_2 + M_m) + g(x) M_3\in \mathfrak{m}(k)$ the abelianity conditions given in the theorem are written as:
$$
\left\lbrace
\begin{array}{ccccccc}
y'_1 &=&-f(x) (&y_1& -& \alpha_1&)\\
y'_2 &=& \,\, f(x) (&y_2& +& \alpha_2&)
\end{array}\right. 
$$
and this system  always has a non-trivial (constant) rational solution $(y_1\,,\, y_2) = (\alpha_1 \,,\, -\alpha_2)$. Therefore, there exists a linear change of variables given by $P_{\mathfrak{m}} := \mathrm{Id} + \alpha_1 M_1 - \alpha_2 M_2$ such that $P_{\mathfrak{m}}[B]=g(x) M_3 + f(x) M_m$. This recovers what we saw in remark \ref{conjugate-algebras}, the fact that these two algebras are conjugate. A similar thing (using again remark \ref{conjugate-algebras}) holds for the second additive case.
\end{rem}

\subsection{A reduction algorithm}\label{A first Reduction Algorithm}

We start with a partial reduction algorithm which summarizes the procedures established above.
\begin{description}
 \item[INPUT:] A matrix $\mathbf{A}\in \mathfrak{sp}(4,k)$, with $k$ an admissible differential field,   and a particular solution  of  $ Y' = \mathbf{A}Y$.
\begin{description}
\item[Step 1:] Perform   symplectic  reduction using section 2.
\item[Step 2:] Reduce the $NVE$ applying Kovacic's algorithm as in section 2 (gives the reduction field $k^\circ$).
\item[Step 3:] If $Lie(NVE)$ is abelian, follow the algorithm of Theorem \ref{algebresmaximales}
\end{description}
 \item[OUTPUT:] either "$\mathfrak{g}$ non-abelian" or $P\in \mathrm{GL}(4\,,\, k^\circ)$ such that $Lie(P[A])$ is abelian
\end{description}
\vspace{0.2cm}


We call $B:=P[A]$ the output of this algorithm and let $\mathfrak{m}:=Lie(B)$.
We could now wish to complete the reduction in the case when $\mathfrak{g}\subsetneq \mathfrak{m}$.
The Lie algebra $\mathfrak{g}$ is a subalgebra of $\mathfrak{m}$ but not \textit{any} algebra. Indeed, when $\mathfrak{g}_N$ is $\mathfrak{g}_a$  (resp. $\mathfrak{g}_m$), the matrix $M_a$ (resp. $M_m$) must belong to $\mathfrak{g}$. Otherwise we would fall again in the case $\mathfrak{g}_N =0$.
We give in the next lemma the list of possible $\mathfrak{g}$.

\begin{lem}\label{sousalgebres}
List of all possible abelian subalgebras:
\begin{enumerate}
\item[1.] The subalgebras of $\mathfrak{m}=\mathrm{span}_{\mathbb{C}}(\alpha_1 M_1 + \alpha_2 M_2\,,\, M_3)$ are among 
	$\{0\}$, $\mathfrak{m}$, of the form $\mathrm{span}_{\mathbb{C}}(\alpha_1 M_1 + \alpha_2 M_2)$  with $(\alpha_1 \,,\, \alpha_2)\in\mathbb{C}^{2} \setminus (0,0) $, or $\mathrm{span}_{\mathbb{C}}(M_3)$.

\item[2.] The subalgebras of $\mathfrak{m}=\mathrm{span}_{\mathbb{C}}(M_a +\alpha_1 M_1 +\alpha_2 M_2\,,\, M_3)$ that contain $M_a$ are among 
	$\{0\}$, $\mathfrak{m}$, 
	or of the form $\mathrm{span}_{\mathbb{C}}(\alpha_1 M_1 +\alpha_2 M_2 + \alpha_3 M_3 + M_a)$ with $(\alpha_1 \,,\, \alpha_2\,,\, \alpha_3)\in\mathbb{C}^{3}$.
	
\item[3.] The subalgebras of $\mathfrak{m}=\mathrm{span}_{\mathbb{C}}(M_2\,,\, M_3\,,\, M_a)$ that contain $M_a$  are among 
	$\{0\}$, $\mathfrak{m}$, 
of the form $\mathrm{span}_{\mathbb{C}}(\alpha_2 M_2 + \alpha_3 M_3\,,\, M_a)$ with $(\alpha_2 \,,\, \alpha_3)\in\mathbb{C}^{2}$,
or of the form $\mathrm{span}_{\mathbb{C}}(M_2 \,,\ \alpha_3 M_3 + M_a)$ with $\alpha_3 \in \mathbb{C}^{\star}$,
or of the form $\mathrm{span}_{\mathbb{C}}(M_3 \,,\ \alpha_2 M_2 + M_a)$ with $\alpha_2 \in \mathbb{C}^{\star}$,
or $\mathrm{span}_{\mathbb{C}}(\alpha_2 M_2 + \alpha_3 M_3 + M_a)$ with $(\alpha_2 \,,\, \alpha_3)\in\mathbb{C}^{2}$.

\item[4.] The subalgebras of $\mathfrak{m}=\mathrm{span}_{\mathbb{C}}(M_m \,,\, M_3)$ that contain $M_m$  are among 
	$\{0\}$, $\mathfrak{m}$, 
or of the form  $\mathrm{span}_{\mathbb{C}}(M_m + \alpha_3 M_3)$ with $\alpha_3\in\mathbb{C}$.

\end{enumerate}
\end{lem}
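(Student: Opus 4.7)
The essential observation is that in each of the four cases the ambient Lie algebra $\mathfrak{m}$ is abelian: this was verified in the multiplication tables preceding Theorem \ref{algebresmaximales}, where each $\mathfrak{m}$ was introduced precisely as a maximal abelian subalgebra of $\mathfrak{h}$. Therefore every vector subspace of $\mathfrak{m}$ is automatically a Lie subalgebra, and the proof reduces to an elementary classification of vector subspaces of a space of dimension at most three, together with one non-triviality constraint.

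The constraint in cases 2, 3, and 4 is that the projection of $\mathfrak{g}$ onto the line spanned by $M_a$ (resp.\ $M_m$)---obtained by reading off the coefficient modulo $M_1, M_2, M_3$---must be surjective; otherwise the image of $\mathfrak{g}$ in $\mathfrak{g}_N$ would vanish and we would be back in case 1. The trivial algebra $\{0\}$ is listed separately as the degenerate possibility. In case 1, $\mathfrak{m}$ is two-dimensional and the enumeration is immediate: $\{0\}$, the full plane, and lines through the origin, with canonical representatives chosen to distinguish the axis $\mathbb{C} M_3$ from the lines having a non-trivial $\alpha_1 M_1 + \alpha_2 M_2$ component. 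Cases 2 and 4 are also two-dimensional; imposing that the $M_a$ (resp.\ $M_m$) coefficient be non-zero and rescaling it to equal $1$ immediately yields the one-parameter families in the statement.

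The bulk of the work lies in case 3, where $\mathfrak{m} = \mathrm{span}_{\mathbb{C}}(M_2, M_3, M_a)$ is three-dimensional. Here I would enumerate subalgebras by dimension. The 1-dimensional subalgebras with non-trivial $M_a$-component are exactly the lines $\mathbb{C}(M_a + \alpha_2 M_2 + \alpha_3 M_3)$. For the 2-dimensional subspaces with surjective $M_a$-projection, I choose a basis consisting of one vector of the form $M_a + \beta_2 M_2 + \beta_3 M_3$ and a second vector lying in the kernel of the projection, i.e.\ in $\mathrm{span}(M_2, M_3)$. Adding a multiple of the first generator to the second normalizes away most of the parameters, and splitting on whether the second generator has a non-zero $M_2$-coefficient, a non-zero $M_3$-coefficient, or both, produces exactly the four sub-families listed. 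The main (and only) obstacle is the book-keeping required to select canonical representatives in each Grassmannian stratum; the Lie-theoretic content is vacuous precisely because $\mathfrak{m}$ is abelian.
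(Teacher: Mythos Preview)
Your proposal is correct and in fact supplies considerably more detail than the paper itself, whose proof consists of the single sentence ``We leave the computation to the reader.'' Your key observation---that each $\mathfrak{m}$ is abelian, so that Lie subalgebras coincide with vector subspaces and the classification reduces to elementary linear algebra on spaces of dimension two or three---is exactly the intended argument, and your interpretation of ``contain $M_a$'' (resp.\ $M_m$) as surjectivity of the projection onto the $M_a$-axis is the right reading of the slightly loose phrasing in the statement.
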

\begin{proof}
We leave the computation to the reader.
\end{proof}

 We may infer a reduction algorithm from the above lemma. Namely, the proofs of propositions 5, 6, 7 and 8 in chapter 4 of  \cite{Ap10a} (one proposition per numeric item of Lemma \ref{sousalgebres}) sum up a reduction algorithm.
 Furthermore, the reduction will be maximal because the procedure proposed deals only with abelian Lie algebras.
 An outline of the proof of those four proposition runs as follows.  We pick for each case an element $\sigma \in \mathrm{exp}(\mathfrak{g})$ and compute its action over a symplectic fundamental matrix of solutions $U$. From the relations obtained by means of these computations we extract the reducibility conditions. The reduction matrix and corresponding reduced form are obtained using a direct computation.   
 To prove that $\mathbf{P}[B]$ is reduced instead of just partially reduced, 
 we prove either that  $\mathfrak{g}$ is monogenous or that $\mathrm{dim}_{\mathbb{C}}(\mathfrak{g})$ is equal to the minimal number of generators of $\mathfrak{g}$ as a Lie algebra.  At the outcome, we obtain a fully reduced form for our differential system.

\begin{rem}
When a system is in reduced form, one may sometimes yet find a simpler reduced form. For example, 
  assume we were able to write some coefficients as $a=f'+g$ with $f,g\in k^\circ$; 
  for instance, if $k=\mathbb{C}(x)$ we can use partial fraction decomposition and Hermite reduction to obtain $a=f'+g$ (and $g$ with simple poles).
Then, like in the reduction process, we can find a gauge transformation which will remove the $f'$ part of $a$, leaving only $g$.
\end{rem}

\begin{exmp}
Consider the system $Y'=MY$ given by $ M=\frac{1+x^2}{x} M_1 + \frac{x^3 - 4x^2 +1}{x-4} M_3\in\mathfrak{sp}(2,\mathbb{C}(x))$ with $f_1(x)=\frac{1+x^2}{x}$ and $f_3(x)=\frac{x^3- 4x^2 +1}{x-4} $. 
Its Lie algebra is $\mathfrak{g}_M = \mathrm{span}_{\mathbb{C}}(M_1\,,\, M_3)$. It is easily checked that $M$ is already in a reduced form. Using partial fraction decomposition, we obtain: $ f_1(x) = x + \frac{1}{x}$ and $\quad f_3(x) = x^2 + \frac{1}{x-4}$. Now, applying the gauge transformation $Q:=\mathrm{Id} + \frac{x^2}{2}M_1 + \frac{x^3}{3}M_3 \in \mathrm{GL}_{4}(\mathbb{C}(x))$ we obtain the equivalent and simpler system $Q[M]=\frac{1}{x}M_1 + \frac{1}{x-4}M_3$.
\end{exmp}
   This kind of procedure can be generalized to any monomial and elementary extensions of $k$ where $k$
   is a differential field of characteristic $0$, see \cite{Br05a}.
\section{Reduction and integrability for Hamiltonian systems in dimension $n=4$ }
\subsection{The algorithm}

The reduction algorithm shown in the previous section is easily turned into an effective version of the Morales-Ramis criterion for meromorphic Hamiltonian systems with two degrees of freedom.
 Indeed, consider a Hamiltonian system such as (\ref{Sham}) with two degrees of freedom and assume that $\Gamma$ is one of its non-constant integral curves.
  The following algorithm either returns a reduced form of the variational equation of (\ref{Sham}) along $\Gamma$, or tells us that the system (\ref{Sham}) is not meromorphically Liouville integrable.\\
\\
\textbf{INPUT :} A meromorphic Hamiltonian function $H:\, U\subset \mathbb{C}^{2n}\longrightarrow \mathbb{C}$, a particular non-stationary integral curve $z(t)$ for the system (\ref{Sham}).
\begin{itemize}
\item[1.] Apply the abelianity test
to ${A}:=Hess(H)(z(t))$.
\item[2.] If $\mathfrak{g}$ is abelian apply the reduction algorithm otherwise stop.
\end{itemize}
\textbf{OUTPUT :} If $\mathfrak{g}$ is abelian we obtain a reduced form ${R}\in\mathfrak{g}(k^\circ)$. Otherwise $\mathfrak{g}$ is not abelian and the Hamiltonian system  (\ref{Sham})  is not meromorphically Liouville integrable.

\subsection{An application: the lunar Hill Problem}\label{Hill}

Let us now apply our algorithm to a real problem: a simplification of the Sun-Earth-Moon problem, more widely known as the Hill Problem. Meromorphic non-integrability of this system has been proved by Morales, Ramis and Sim\'o  in \cite{MoSiSi05a}. Applying our techniques, we give a new (simpler) proof of this result. Although, the lunar Hill problem  is modeled by a Hamiltonian function with three degrees of freedom like all three bodies problems, it is shown in \cite{MoSiSi05a} that via a symplectic change of variable over $\bar{k}$, the lunar Hill Problem can be seen as the two degrees of freedom Hamiltonian system given by the Hamiltonian function:
$$H:=\mathbf{i}(q_1 q_2 - p_1 p _2)-4q_1 q_2 (q_1 p_1 - q_2 p_2) - 4\mathbf{i}(3q^4_1 -2 q^2_1 q^2_2 + 3q^2_2)q_1 q_2.$$
We compute $X_H$ (see \cite{MoSiSi05a}) and notice that it possesses two invariant planes: 
$$\Pi_1 := \left\lbrace q_2 = 0\, , \, p_1 = 0\right\rbrace \text{ and }\Pi_2 := \left\lbrace q_1 = 0\, , \, p_2 = 0\right\rbrace. $$ 
One checks easily that $X_H$ has a particular solution over the invariant plane $\Pi_1$ given by $\gamma(x) = (f(x) ,  0 , 0 , \mathbf{i}f'(x))$ such that $f(x)$ satisfies
$ (f'(x))^2 = - f(x)^2 + 4 f(x)^6 + 2h$ so that, by differentiating, $f''(x) = -f(x) + 12 f(x)^5$. We may take $f(x)^2 =\frac{6h}{3\wp(x)+1}$
where $h\in\mathbb{C}$ is an indeterminate constant and $\wp(x) = \wp(x;\frac{4}{3},\frac{8}{27}(1-216h^2))$ is a Weierstrass-$\wp$ function. We work  over the differential field   $k:=\mathbb{C}(f(x),f'(x))$. We refer to the elements of $k$ as rational functions. Consider now the variational system over $\Pi_1$ along $\gamma$. Its matrix ${A}$ is
$$
{A} :=\left[\begin{array}{cccc} 0 & -4f(x)^2 & 0 & -\mathbf{i}\\
								 0 & 0 & -\mathbf{i} & 0\\
								 0 & -\mathbf{i}(1-60f(x)^4)& 0 & 0\\
								 -\mathbf{i}(1-60f(x)^4) & -8\mathbf{i}f(x)f'(x) & 4f(x)^2 & 0 \end{array}\right].
$$
We perform step 1. of the algorithm: we compute $ {P}$ and
	$A_N:= {P}[{A}]= {P}^{-1} ({A}  {P} -P')$:
$$
 {P}:=\left[\begin{array}{cccc} f'(x) & 0 & 0 & 0\\
								 0 & 1 & 0 & 0\\
								 0 & \frac{\mathbf{i}f''(x)}{f'(x)}& \frac{1}{f'(x)} & 0\\
								 -\mathbf{i}f''(x) & 0 & 0 & 1 \end{array}\right]
\,\Longrightarrow\, A_N:=\left[\begin{array}{cccc}
								 0 & -\frac{4f(x)^2}{f'(x)} & 0 & -\frac{\mathbf{i}}{f'(x)}\\
								 0 & \frac{f''(x)}{f'(x)} & -\frac{\mathbf{i}}{f'(x)} & 0\\
								 0 & 0 & 0 & 0\\
								 0 & F(x)  & \frac{4f(x)^2}{f'(x)} & -\frac{f''(x)}{f'(x)} \end{array}\right]
$$

\noindent where
$F(x):=\frac {8\,\mathbf{i}\, f(x)  \left(8\, f (x )^{6}-2\,h \right) }{f' (x) }$.
Thus, the normal variational equation matrix $N$ is defined by:
$$
N:=\left[\begin{array}{cc}\frac{f''(x)}{f'(x)} &  0\\
F(x)  & -\frac{f''(x)}{f'(x)}
\end{array}\right]\cdot
$$

Variation of constants shows that the Normal Variational Equation has a fundamental solution matrix
$$
\,U_N:=\left[\begin{array}{cc}f'(x) &  0\\
\frac{8i\left(f(x)^8-h f(x)^2\right)}{f'(x)}   & \frac{1}{f'(x)}
\end{array}\right].
$$
Thus we have $U_N\in Sp(2,k)$ which is equivalent (in the notations of proposition \ref{Kovacic algorithm} on page \pageref{Kovacic algorithm})
to $\mathfrak{g}_N =0$. Now reduce the normal variational equation: the matrix $P_N\in Sp(4,k)$ puts $A_N$ into the form $A:=P_N[A_N]$ where
$$
P_N := \left[\begin{array}{cccc}1 & 0 & 0& 0
\\0& f'(x) &  0 & 0
\\ 0& 0& 1 & 0
\\0& \frac{8i\left(f(x)^8-h f(x)^2\right)}{f'(x)} & 0   & -\frac{1}{f'(x)} \end{array}\right]
\quad\hbox{\rm and }
\quad
A:=\left[\begin{array}{cccc}
0 & 4G(x)& 0 & -\frac{\mathbf{i}}{f'(x)^2}\\
0 &0& -\frac{\mathbf{i}}{f'(x)^2} &0
\\ 0& 0& 0&0\\
 0 &0& -4G(x) & 0\end{array}\right].
$$
with
$ G(x) ={\frac { f(x)^{2}
 \left(-2\,f(x)^{6}-4\,h+ f(x)^{2} \right) }{4\,f(x)^{6} - f(x)^{2}+2\,h}}
\in k$.

We apply our reduction algorithm (theorem \ref{algebresmaximales} page \pageref{algebresmaximales}).
We are in case $1$ so
we search for (non-zero) constants $(\alpha,\, \beta) \in\mathbb{C}^{2}\setminus (0\,,\,0)$ such that
$\int (\alpha 4 G(x) + \beta\frac{i}{f'(x)^2}) \in k$.
Changing variable $x$ to $t=f(x)$ in $I$ (so $dx=\frac{dt}{\sqrt{4t^6-t^2+2h}}$), we are reduced to a limited integration problem on a hyperelliptic curve.
 Using section 4.1 and setting $h=1$,  this is in turn equivalent with  finding $(\alpha\,,\, \beta)\in\mathbb{C}^{2}\setminus (0,0)$ such that
\begin{equation}\label{Int}
I:=\int \alpha 4 F_1(t) + \beta F_2(t) dt \in \mathbb{C}(t,\sqrt{4t^6 - t^2 + 2})
\end{equation}
where
$$
F_1(t):=\frac{4 t^2 (-2t^6 + t^2 - 4)}{(4 t^6 - t^2 + 2)^{3/2}} \quad \text{ and }\quad
F_2(t):=\frac{ \mathbf{i}}{(4 t^6 - t^2 + 2)^{3/2}} .
$$
To solve this algorithmically, we apply our method from lemma \ref{admissible-field} page \pageref{admissible-field}.
Each  $F_i$ is a solution of $L_i := \frac{d}{dt} - \frac{F'_{i}(t)}{F_{i}(t)}$ for $i=1,2$. Hence, $\alpha F_1(t) + \beta F_2(t)$ will be a solution of $LCLM(L_1 \,,\, L_2)$  for all $(\alpha , \beta)\in\mathbb{C}^{2}$ which implies that $I$ is a solution of the differential operator $L:=LCLM(L_1\,,\, L_2) \frac{d}{dt}$. Thus, solving our limited integration problem will amount to finding non-trivial rational solutions (i.e solutions in $ \mathbb{C}(t,\sqrt{4t^6 - t^2 + 2})$ other than $1$) for the (parameter free) differential operator
$$
L:={\left(\frac{d}{dt}\right)}^{3}+{\frac { \left(44\,{t}^{6}-3\,{t}^{2}-2 \right) }{
 \left(-{t}^{2}+4\,{t}^{6}+2 \right) t}}\left(\frac{d}{dt}\right)^2+3\,{\frac {{t}^{2} \left(48\,{t}^{8}-24\,{t}^{4}+96\,{t}^{2}-1 \right) }{ \left(-{t}^{2}+4\,{t}^{6}+2 \right) ^{2}}}\frac{d}{dt}.
$$
By studying the singularities of $L$,  we prove that the latter does not have non-trivial rational solutions.
Indeed, the equation has three regular singularities:  $0$ with exponents $\{0\,,\, 1\,,\, 3\}$,
 $\alpha$ such that $4\alpha^6-\alpha^2 + 2 =0$ with exponents $\{0\,,\, 1/2\,,\, 3/2\}$ and
last $\infty$ with exponents $\{0\,,\, 0\,,\, 8\}$.
All these singularities have minimal exponent equal to $0$ so  any  solution of $L$ in $C(t,\sqrt{4t^6-t^2+2})$ must be constant (\cite{SiUl93a}).

Therefore, Theorem \ref{algebresmaximales}  shows that $\mathfrak{g}$ is not abelian which implies that the Hill Problem is not meromorphically integrable.
\begin{rem}
For the meromorphical non-integrability of the Hill problem, one of the referees suggested an alternate proof using integration on an algebraic curve instead of the language of differential operators. Let us give its outline. Taking as above $h=1$, we can write the integral $I$ as
$$I:=\int \frac{P(t)}{D(t)\sqrt{D(t)}} dt $$
 where $P(t):=4 \alpha t^2 (-2 x^6 +x^2 - 4) + \mathbf{i}\beta$ and $D(t):=4 t^6 - t^2 + 2$. Proving our point, amounts to proving that unless we pick $(\alpha\,,\, \beta) = (0\,,\,0)$, the integral $I\notin\mathbb{C}(t,\sqrt{D(t)})$. 
 We assume that if $I$ is rational then there exist some $A(t), B(t)\in\mathbb{C}(t)$ such that $I=A(t)+B(t)\sqrt{D(t)}$. Differentiating this expression, we obtain $A'(t)=0$ and therefore
 \begin{equation}
 B'(t)+\frac{D'(t)}{2D(t)} B(t)-\frac{P(t)}{D(t)^2}=0 .
 \end{equation}
Solving this Risch equation we are done. Indeed, we see that the only rational solution it admits corresponds to $(\alpha\,,\,\beta)=(0,0)$ and is trivial. It is true that this proof only requires the resolution of a Risch equation instead of handling a third order differential operator. However, we think that our first choice is a good one. Indeed, we avoid discussing parameters $(\alpha,\beta)$ and 
our method can be applied systematically to any situation satisfying the conditions stated in section 4.1.
\end{rem}
\begin{rem}
\begin{enumerate}
\item[1.] if we had chosen the energy level $h=0$, the same argument leads to looking for solutions in
$C(t,\sqrt{4t^6-t^2})$
of the differential operator
$${\left(\frac{d}{dt}\right)}^{3}+{\frac { \left(44\,{t}^{4}-3 \right)  }{t \left(2\,{t}
^{2}-1 \right)  \left(2\,{t}^{2}+1 \right) }}\left(\frac{d}{dt}\right)^2
+3\,{\frac { \left(48\,{t}^{8}-24\,{t}^{4}-1 \right) }{{t}^{2} \left(2\,{t}^{2}-1 \right) ^
{2} \left(2\,{t}^{2}+1 \right) ^{2}}} \frac{d}{dt}$$
and, indeed, we find (using {\sc Maple}), the solution $\frac {-1+8\,{t}^{4}}{\sqrt {2\,{t}^{2}+1}\sqrt {2\,{t}^{2}-1}{t}^{2}} \in k$.
So, on the energy level $h=0$, our reduction method shows that the system has an abelian Lie algebra.
\\
\item[2.] In an example like this where coefficients are parametrized by Weierstrass functions, one would need in general to use the special algorithms developed in \cite{Si91a} and improved in \cite{BuLaHo04a} to achieve the reduction.
\end{enumerate}
\end{rem}
\begin{rem}\label{NVE}
Notice that the \textit{normal variational equation} computed in our proof and the one given in \cite{MoSiSi05a} are different because they arise from two different definitions of Normal Variational equation.
On one hand, the construction we use can  be  termed as \emph{ algebraic }normal variational equation  since it is obtained by a purely algebraic manipulation (see section \ref{222}): to wit, a linear Hamiltonian change of variable obtained from a parametrization of the integral curve $\Gamma$ via the symplectic Gram-Schmidt algorithm.
On the other hand, the notion used in  \cite{MoSiSi05a}  could be qualified as \emph{geometric} normal variational equation since reduction (in general not symplectic reduction) is performed with respect to an invariant plane that contains the nondegenerate integral curve $\Gamma$. Both notions coincide in the case when the invariant plane considered in the geometric construction is actually a symplectic manifold (with symplectic form the restriction to the invariant plane of the global symplectic form).

\end{rem}

\section{Conclusion}
The notions of reduction and reduced form developed in this paper provide a procedure to decide the abelianity of the Lie algebra of the differential Galois group of the variational equations $[A]$ of Hamiltonian systems. Previously, applications of the Morales-Ramis criterion were generally limited to normal variational equations.
\\

When the Lie algebra is indeed abelian, putting the system into a reduced form is very convenient because it also allows to (partially) reduce higher variational equations in view of a concrete application of the Morales-Ramis-Sim\'o criterion.
On one hand, the higher variational equations are reducible linear differential systems whose diagonal blocks are $A$ and its symmetric powers (in the sense of Lie algebras)~; knowing a (partial) reduction matrix $P$ for $[A]$, its symmetric power $Sym^m(P)$ is a (partial) reduction matrix for $sym^m(A)$ hence inducing a partial reduction on the higher variational systems. 
Applying the techniques of section 4 to develop constructive abelianity criteria for such systems is used in \cite{Ap10a} and in \cite{AW11a} and will be the subject of other future work.\\

In many cases, Hamiltonian systems come as parametrized systems, for example with a base field $C=\mathbb{Q}(t_{1},\ldots,t_{s})$. Though there cannot exist an algorithm deciding for which values of the parameters the variational equation will admit rational solutions (\cite{Bo00a}), it turns out that in many situations, authors have been able to apply criteria like the Kovacic algorithm (or variants) to overcome that difficulty. We note that, in this case, the problem of  applying our techniques becomes tractable.  As shown in section 4, when the normal variational equation has an abelian Lie algebra, the abelianity of $\mathfrak{g}$ depends on whether integrals belong to $k$, the latter depending on whether some residues are null or not: this problem should be decidable. So, we believe that for families of parametrized Hamiltonian systems, once the normal variational equation has been fully reduced (which is not the contribution of this paper), the remaining part of the reduction should be tractable even in the presence of parameters.
\\
\section*{Appendix: Symplectic Gram-Schmidt Algorithm and Symplectic Linear Differential Systems}\label{GSS}
\def\thesection{A}
The material in this appendix is mostly well-known and included for the sake of the exposition's clarity. %

\subsection{A symplectic Gram-Schmidt method}\label{GS}

Let $(V,\omega)$ denote a symplectic vector space of dimension $2n$ with a basis 
${\bf u}:=\{u_{1}$,\ldots,$u_{2n}\}$. 
We briefly review a standard construction sometimes called \emph{symplectic Gram-Schmidt method}
(see e.g \cite{AM78}, chapter 3.1) for computing a symplectic (or Darboux) basis ${\bf e}:=\{e_{1}$,\ldots,$e_{2n}\}$  from ${\bf u}$,
i.e one on which the matrix of $\omega$ is $J$. 

Let $e_{1}:=u_{1}$. As $\omega$ is non-degenerate, one of the $u_{i}$, say $u_{2}$  satisfies 
$w(e_{1},u_{i})\neq 0$ hence we may set $e_{n+1}:=\frac{1}{\omega(e_{1},u_{2})}u_{2}$ 
so that $\omega(e_{1},e_{n+1})=1$.
Let $V_{1}:=\mathrm{span}_{C}\lbrace e_{1},e_{n+1}\rbrace$ and $V_{2}:=V_{1}^{\bot_{\omega}}$ be its symplectic orthogonal.
Then $V_{1}\bigcap V_{2}=\{0\}$ and a basis of $V_{2}$ is given by 
	$v_{i-2}:=u_{i}-\omega(u_{i},e_{n+1})e_{1}+\omega(u_{i},e_{1})e_{n+1}$ (for $i=3,\ldots,2n$)
so $V=V_{1}\oplus V_{2}$ and we may apply recursively the above to the basis $v_{1},\ldots,v_{n-2}$ of $V_{2}$. By construction, the 
result is a symplectic basis.	

\subsection{Symplectic linear differential systems}

Let  $[A]:\; Y'=AY$ with  $A\in \mathcal{M}_{2n} (k)$ be a linear differential  system such that $Gal([A])\subset Sp(2n,C)$. 
We may transform $A$ into a gauge-equivalent matrix $B\in \mathfrak{sp}(2n,C)$ as follows.\\
To $[A]$ is associated a differential module $({\cal M},\partial)$
where ${\cal M}=k^{2n}$ and  the action of a basis  and the action of $\partial$ over a basis $\epsilon=\{\epsilon_{1},\ldots,\epsilon_{2n}\}$ is given by $\partial(\epsilon)=-{}^{t}A\epsilon$.
This way, for $Y=\sum_{i} y_{i}\epsilon_{i}\in {\cal M}$,
$\partial{Y}=Y'-AY$. As $G\subset Sp(2n,C)$, there exists a vector 
$w=(\ldots,w_{i,j},\ldots)\in \bigwedge^{2}{ {\cal M}}$ (the coordinates $w_{i,j}\in k$ are expressed in the basis $(\epsilon_{i}\wedge \epsilon_{j})$)
such that $\partial(w)=0$. The matrix $\Omega=(\omega_{i,j})$ is a skew-symmetric non degenerate matrix (with the convention $w_{i,i}=0$). This $\Omega$ is the matrix of a symplectic form $\omega$ on ${\cal M}$. Applying the above symplectic Gram-Schmidt  method,  
a symplectic basis ${\bf e}=\{e_{1},\ldots,e_{n}\}$ of $\cal M$ is obtained.
Let $B$ be the matrix of $\partial$ on this basis. As the matrix of $\omega$ on ${\bf e}$ is $J$, the matrix of the (musical) isomorphism $\omega^\flat$ between $\cal M$ and ${\cal M}^{\star}$ is still $J$ so that $JB+{}^{t}BJ=0$.
\\

The construction of Normal Variational Equations in section \ref{222} admits a simple explanation in this formalism. 
We assume that $A\in \mathfrak{sp}(2n\,,\, C)$ so that the matrix of the symplectic form on $\mathcal{M}$ is $J$ in this basis. If we know a particular solution $Y_1 ={}^t (y_1 ,\ldots , y_{2n})\in k^{2n}$ of $Y'=AY$, 
we obtain what we call the (algebraic) \emph{Normal Variational Equation} by applying the Gram-Schmidt method to the basis $Y_1 , e_2,\ldots ,e_{2n}$ of $\mathcal{M}$. Notice that the Normal Variational Equation defined in this algebraic way doesn't necessarily coincide with the definition given for instance in \cite{MoSiSi05a}.

In our work, the construction implies that $y_{n+1}$ is by construction a constant vector : by duality,  $J\cdot  {}^t (1,\,0\,,\stackrel{2n-1}{\ldots},\, 0)$ is a solution of $Y'=-^{t}AY$ and therefore is a first integral of $Y'=AY$ (\cite{We95a,Mo99a}).  

\subsection{Symplectic solution matrices of symplectic linear differential systems}\label{Symplectic solution matrices of symplectic linear differential systems}

We consider again $[A]\,\,:\,\, Y' =AY$ with now $A\in \mathfrak{sp}(2n\,,\, C)$ and recall how to construct a symplectic fundamental solution matrix (this result is proved e.g in \cite{PuSi03a,MiSi02a}).
\begin{lem}
Let $k$ be a differential field and let $[A]\,:\, Y'=AY$ with $A\in\mathfrak{sp}(2n,C)$ be a linear Hamiltonian system. Let $K\supset k$ be the Picard-Vessiot extension of $k$ associated to $[A]$. Then, $[A]$ admits a symplectic  fundamental matrix of solutions $U\in\mathrm{Sp}(2n,K)$.
\end{lem}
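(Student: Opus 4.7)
The plan is to start from an arbitrary fundamental solution matrix and use the symplectic structure of $A$ to manufacture a symplectic one by a suitable constant change of basis on the right.

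First, pick any fundamental solution matrix $V\in \mathrm{GL}_{2n}(K)$ of $Y'=AY$ and consider $C:=V^{T}JV\in \mathcal{M}_{2n}(K)$. Differentiating and using $V'=AV$, we compute
\begin{equation*}
(V^{T}JV)'=V^{T}A^{T}JV+V^{T}JAV=V^{T}(A^{T}J+JA)V=0,
\end{equation*}
because $A\in\mathfrak{sp}(2n,k)$ exactly means $A^{T}J+JA=0$. Hence $C$ is a constant matrix, i.e.\ $C\in \mathcal{M}_{2n}(C)$. It is skew-symmetric (since $J$ is and $C^{T}=V^{T}J^{T}V=-V^{T}JV=-C$) and invertible (since $V$ is and $\det(J)=1$). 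So $C$ is the Gram matrix of a non-degenerate skew-symmetric bilinear form on the $C$-vector space $C^{2n}$.

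Second, apply the symplectic Gram--Schmidt method recalled in Appendix~A.1 to this form on $C^{2n}$: it produces a basis of $C^{2n}$ on which the form has matrix $J$. Writing this change of basis as $P\in\mathrm{GL}_{2n}(C)$, we obtain $P^{T}CP=J$. Because $P$ has constant entries in $C$, the matrix $U:=VP$ is still a fundamental solution matrix of $Y'=AY$; indeed $U'=V'P=AVP=AU$, and $U$ remains invertible. Then
\begin{equation*}
U^{T}JU=P^{T}V^{T}JVP=P^{T}CP=J,
\end{equation*}
so $U\in\mathrm{Sp}(2n,K)$, as desired.

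The only step that is not purely formal is the existence of the symplectic basis for the form of matrix $C$; but this is precisely the content of the Gram--Schmidt procedure recalled in Appendix~A.1, carried out over the constant field $C$. All other steps are one-line verifications, and the argument is valid regardless of whether the base field $k$ is $C_{1}$ or not, since $P$ lies in $\mathrm{GL}_{2n}(C)$ and does not affect the Picard--Vessiot extension.
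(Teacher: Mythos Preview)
Your proof is correct and follows essentially the same approach as the paper: both arguments show that $V^{T}JV$ is a constant, non-degenerate skew-symmetric matrix and then apply the symplectic Gram--Schmidt procedure over the constant field to normalize it to $J$ via a right multiplication by a constant matrix. Your direct differentiation of $V^{T}JV$ is slightly cleaner than the paper's route through the dual system (and is in fact the computation alluded to in the Remark following the paper's proof), but the substance is the same.
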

\begin{proof}
Assume that we have a fundamental solution matrix $U$. Then $JU$ is a fundamental matrix of the dual system $[A^{\star}]\,\,:\,\, Y'=-^{t}A Y$ so there exists a constant matrix $\Phi$ such that  $JU ={}^{t}U^{-1}\Phi$. Therefore, $\Phi = {}^{t}UJU$ which is antisymmetric so it is the matrix of a symplectic form over the solution space of $[A]$ spanned by the columns $U_1 ,\ldots , U_{2n}$ of $U$. Applying the symplectic Gram Schmidt method to the basis $U_1 ,\ldots , U_{2n}$ (for the symplectic form $\omega (U_i \,,\, U_j):={}^{t}U \Phi U_j$) yields a symplectic basis $V_1 ,\ldots ,  V_{2n}$ and the matrix $\mathbf{V}=(V_1 , \ldots , V_{2n})$ is a fundamental solution matrix which satisfies by construction, ${}^{t}VJV =J$. 
\end{proof}

\begin{rem} As noted by a referee, there is a simpler proof when $k=C(x)$ or $k$ is some field of germs of holomorphic functions. Assume that $x=0$ is an ordinary point and construct the local fundamental solution matrix $U\in \mathcal{M}_n(C[[x]])$ such that $U(0)=\mathrm{Id}$.
Then, as ${}^t AJ+JA=0$, a calculation shows that $(^t U J U)'=0$ and, as $U(0)=\mathrm{Id}$, ${}^t U J U = J$. 
\end{rem}

Now we focus on the case where $2n=4$ and study the structure of symplectic matrices belonging to $\mathrm{Sp}(4,k) := \{\,U\in {\cal M}_{4}(k)\,:\, {}^t U J U = J\,\}$. 
The Lie algebra $\mathfrak{sp}(4 ,k)$ is the set of $4\times 4$ matrices $A\in M_{4}(k)$ such that
$$
A=\left[\begin{array}{cc} M & S_2 \\ S_1 & -{}^t M \end{array}\right]\quad\text{with}\quad M, S_i \in M_{2}(k)\quad \text{and}\quad {}^t S_i = S_i.
$$

Consider the linear differential system $[A]\,:\, X'=AX$ with $A\in\mathfrak{sp}(4,k)$. Suppose that it admits at least one rational solution $X_1 \in k^4$. Pick $P\in\mathrm{GL}(2n , k)$ such that  $X=PY$  and $P e_1 = X_1$ where $e_1:={}^t (1,0,0,0)$. 
This implies that  $Y_1 = e_1$ is a solution of the gauge equivalent system $[P[A]]$. 
Hence, the first column of $P[A]$ is null.
If, in addition, $A\in\mathfrak{sp}(4,k)$ and  $P\in\mathrm{Sp}(4,k)$, then $P[A]\in\mathfrak{sp}(4,k)$ and its form is

\begin{equation}\label{PlA}
P[A] = \left[\begin{array}{cccc} 0 & a & d & e \\ 0 & n_{11} & e & n_{12}\\ 0 & 0 & 0 & 0\\ 0 & n_{21} & -a & n_{11}\end{array}\right].
\end{equation}


\begin{prop}\label{FMS}
There is a symplectic fundamental matrix of solutions, $U\in\mathrm{Sp}(4,K)$ of the partially reduced system $(\ref{PlA})$ which has the following structure
$$
U=(u_{i,j})= \left[\begin{array}{cccc} 1 & q_{11}\Omega_1 - q_{12}\Omega_2 & \Omega_3 & q_{12}\Omega_1 + q_{22} \Omega_2 \\
 0 & q_{11}  &  \Omega_2 & q_{12}\\ 0 & 0 & 1 & 0 \\ 0 & q_{12} & -\Omega_1 & q_{22}\end{array}\right]
$$
where $Q:=(q_{i,j})$ is a unimodular fundamental matrix of solutions of the system $Y'=NY$, with $N:=(n_{i,j})\in\mathfrak{sp}(2\,,\, k)$.
\end{prop}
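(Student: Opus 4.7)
My plan is to exploit the very specific block structure of the partially reduced matrix in (\ref{PlA}): its first column is zero and its third row is zero. The first condition tells us that $e_1=(1,0,0,0)^T$ is a constant solution of $X'=P[A]\,X$, which directly accounts for the first column of $U$. The second condition tells us that $Y_3$ is a conserved quantity along any solution, so we can split the construction of the remaining three columns according to whether $Y_3\equiv 0$ or $Y_3\equiv 1$.

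For the columns corresponding to $Y_3=0$, I would observe that rows two and four of $P[A]$, when restricted to vectors with $Y_3=0$, involve only $Y_2$ and $Y_4$ and reduce precisely to the normal variational system $\binom{Y_2}{Y_4}{}' = N\binom{Y_2}{Y_4}$. Hence any solution of $[N]$ extends to a solution of $[P[A]]$ with $Y_3=0$ by a single quadrature $Y_1' = aY_2+eY_4$. Taking two independent solutions of $[N]$ packaged as the unimodular fundamental matrix $Q=(q_{ij})$ yields the second and fourth columns of $U$, whose entries in rows $2$ and $4$ are exactly the entries of $Q$, while the entries in row $1$ are primitives (with constants of integration to be pinned down below) and those in row $3$ vanish.

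For the third column I would fix $Y_3\equiv 1$. The system then becomes an affine ODE in $(Y_1,Y_2,Y_4)$ with inhomogeneous terms read off the third column of $P[A]$: using the integrals $\Omega_1,\Omega_2,\Omega_3\in K$ defined successively by $\Omega_1'=a-n_{11}\Omega_1-n_{21}\Omega_2$, $\Omega_2'=e+n_{11}\Omega_2-n_{12}\Omega_1$, and $\Omega_3'=d+a\Omega_2-e\Omega_1$, one produces a particular solution of the form $(\Omega_3,\Omega_2,1,-\Omega_1)^T$, which is the claimed third column.

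The last step is to enforce symplecticity $U^{T}JU=J$, which amounts to six pairing relations $\omega(U_i,U_j)=J_{ij}$. Because $U_1=e_1$, three of these reduce to the entries $(U_j)_3$, which have the correct values by construction. The relation $\omega(U_2,U_4)=1$ becomes $\det Q = 1$, which is why $Q$ is required to be unimodular (permissible since $N\in\mathfrak{sp}(2,k)=\mathfrak{sl}(2,k)$, so $\det Q$ is a nonzero constant which we rescale). The two remaining relations $\omega(U_2,U_3)=\omega(U_4,U_3)=0$ are linear in the first-row entries of $U_2$ and $U_4$, and they uniquely pin down those integration constants to yield the precise $\mathbb{C}$-linear combinations of $\Omega_1,\Omega_2$ with coefficients from the columns of $Q$ displayed in the statement. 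I expect the only real obstacle to be bookkeeping: tracking the sign conventions in $J$ and in the symplectic Gram--Schmidt process carefully enough that the claimed signs and indices in the first row of $U$ come out correctly; once that is done, the verification that $U'=P[A]\,U$ is a direct calculation using the ODEs satisfied by the $q_{ij}$ and the $\Omega_i$.
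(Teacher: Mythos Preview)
Your proposal is correct and follows a genuinely different route from the paper. The paper argues top--down: it invokes the previous lemma to assert that \emph{some} symplectic fundamental matrix $U\in\mathrm{Sp}(4,K)$ exists, normalises its first column to $e_1$, and then reads off the third row and the first-row entries $u_{12},u_{14}$ purely from the algebraic relations $\omega(U_1,U_j)=\delta_{3j}$, $\omega(U_2,U_3)=\omega(U_3,U_4)=0$, $\omega(U_2,U_4)=1$, without ever touching the differential equation again. You instead build $U$ bottom--up from the ODE: you exploit the vanishing first column and third row of $P[A]$ to split off the constant solution $e_1$ and the conserved quantity $Y_3$, manufacture columns $2$ and $4$ from the NVE fundamental matrix $Q$ plus quadratures, and column $3$ from a particular solution with $Y_3\equiv 1$, and only at the end impose symplecticity to fix the remaining constants. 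What the paper's approach buys is brevity: there is nothing to verify, since $U$ is a solution matrix by fiat and the symplectic constraints are linear equations one simply solves. What your approach buys is transparency about \emph{why} the entries look the way they do (in particular why the $\Omega_i$ are the natural quadratures), at the cost of the bookkeeping you anticipate---you must check that the expressions $q_{i1}\Omega_1+q_{i2}\Omega_2$ forced by $\omega(U_j,U_3)=0$ really are primitives of $a\,q_{i1}+e\,q_{i2}$ (equivalently, that $\omega(U_j,U_3)$ is constant along solutions, which follows from $P[A]\in\mathfrak{sp}(4,k)$). One caution: the printed matrix in the statement has index typos (e.g.\ $q_{12}$ where $q_{21}$ is meant in position $(4,2)$), so when you carry out the sign and index bookkeeping, trust the symplectic relations and the specialisations in Lemma~\ref{tableU} rather than the displayed formula.
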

\begin{proof}
We can set without any loss of generality that the first column of $U:=(u_{i,j})$, denoted  $U_1$, is $e_1$. Since $U\in\mathrm{Sp}(4,K)$ ($K$ being the Picard Vessiot extension), we obtain  $\delta_{3j} = \omega (U_1 , U_j) = \omega(e_1, U_j) = u_{3j}$. Therefore, the matrix $U$ can be written in the form
$$
U= \left[\begin{array}{cccc} 1 & x & \Omega_3 & y\\
 0 & q_{11}  &  \Omega_2 & q_{12}\\ 0 & 0 & 1 & 0 \\ 0 & q_{12} & -\Omega_1 & q_{22}\end{array}\right]
$$
and the expression of $x$ and $y$ in terms of the remaining coefficients follows from the relations $\omega(U_2 , U_3) = \omega(U_3 , U_4)=0$. Furthermore,  since $\omega(U_2 , U_4) = 1 = \mathrm{det}(Q)$, $Q$ is unimodular and we are done.
\end{proof}


\begin{thebibliography}{45}
\expandafter\ifx\csname natexlab\endcsname\relax\def\natexlab#1{#1}\fi
\expandafter\ifx\csname url\endcsname\relax
  \def\url#1{\texttt{#1}}\fi
\expandafter\ifx\csname urlprefix\endcsname\relax\def\urlprefix{URL }\fi

\bibitem[{Abraham and Marsden(1978)}]{AM78}
Abraham, R., Marsden, J.~E., 1978. Foundations of mechanics. Benjamin/Cummings
  Publishing Co. Inc. Advanced Book Program, Reading, Mass., second edition

\bibitem[{Aparicio-Monforte(2010)}]{Ap10a}
Aparicio-Monforte, A.,   2010. M\'ethodes effectives pour
  l'int\'egrabilit\'e des syst\`emes dynamiques. Ph.D. thesis, Universit\'e de
  Limoges.
\url{https://sites.google.com/site/ainhoaaparicio/home/recherche/}

\bibitem[{Aparicio-Monforte and Weil(2011)}]{AW11a}
Aparicio-Monforte, A., Weil, J.-A.,   2011. A reduction method for
  higher order variational equations of hamiltonian systems. In: Symmetries and
  Related Topics in Differential and Difference Equations. Vol. 549 of
  Contemporary Mathematics. Amer. Math. Soc., Providence, RI, pp. 1--15.

\bibitem[{Baider et~al.(1996)Baider, Churchill, Rod, and Singer}]{BaChRoSi96a}
Baider, A., Churchill, R.~C., Rod, D.~L., Singer, M.~F., 1996. On the
  infinitesimal geometry of integrable systems. In: Mechanics day ({W}aterloo,
  {ON}, 1992). Vol.~7 of Fields Inst. Commun. Amer. Math. Soc., Providence, RI,
  pp. 5--56.

\bibitem[{Berman(2002)}]{Be02a}
Berman, P., 2002. Calculating the {G}alois group of {$Y'=AY+B$}, {$Y'=AY$}
  completely reducible. J. Symbolic Comput. 33~(6), 887--898.

\bibitem[{Berman and Singer(1999)}]{BeSi99a}
Berman, P.~H., Singer, M.~F., 1999. Calculating the {G}alois group of {$L\sb
  1(L\sb 2(y))=0$}, {$L\sb 1,L\sb 2$} completely reducible operators. J. Pure
  Appl. Algebra 139~(1-3), 3--23

\bibitem[{Bertrand(2001)}]{Be01a}
Bertrand, D., 2001. Unipotent radicals of differential {G}alois group and
  integrals of solutions of inhomogeneous equations. Math. Ann. 321~(3),
  645--666.

\bibitem[{Birkhoff(1966)}]{Bi66a}
Birkhoff, G.~D., 1966. Dynamical systems. With an addendum by Jurgen Moser.
  American Mathematical Society Colloquium Publications, Vol. IX. American
  Mathematical Society, Providence, R.I.

\bibitem[{Boucher(2000)}]{Bo00a}
Boucher, D., 2000. Sur les {\'e}quations diff{\'e}rentielles lin{\'e}aires
  d{\'e}pendant de param{\`e}tres ; application aux syst{\`e}mes hamiltoniens.
  Ph.D. thesis, Universit\'e de Limoges.

\bibitem[{Bronstein(2005)}]{Br05a}
Bronstein, M., 2005. Symbolic integration. {I}, 2nd Edition. Vol.~1 of
  Algorithms and Computation in Mathematics. Springer-Verlag, Berlin,
  transcendental functions, With a foreword by B. F. Caviness.

\bibitem[{Burger et~al.(2004)Burger, Labahn, and van Hoeij}]{BuLaHo04a}
Burger, R., Labahn, G., van Hoeij, M., 2004. Closed form solutions of linear
  {ODE}s having elliptic function coefficients. In: I{SSAC} 2004. ACM, New
  York, pp. 58--64.

\bibitem[{Churchill et~al.(1995)Churchill, Rod, and Singer}]{ChRoSi95a}
Churchill, R.~C., Rod, D.~L., Singer, M.~F., 1995. Group-theoretic obstructions
  to integrability. Ergodic Theory Dynam. Systems 15~(1), 15--48.

\bibitem[{Compoint and Singer(1999)}]{CoSi99a}
Compoint, E., Singer, M.~F., 1999. Computing {G}alois groups of completely
  reducible differential equations. J. Symbolic Comput. 28~(4-5), 473--494

\bibitem[{Cook et~al.(2005)Cook, Mitschi, and Singer}]{CoMiSi05a}
Cook, W.~J., Mitschi, C., Singer, M.~F., 2005. On the constructive inverse
  problem in differential {G}alois theory. Comm. Algebra 33~(10), 3639--3665.

\bibitem[{Fulton and Harris(1991)}]{FuHa91a}
Fulton, W., Harris, J., 1991. Representation theory. Vol. 129 of Graduate Texts
  in Mathematics. Springer-Verlag, New York, a first course, Readings in
  Mathematics.

\bibitem[{Hardouin(2005)}]{Ha05a}
Hardouin, C., 2005. Calcul du groupe de {G}alois du produit de trois
  op\'erateurs diff\'erentiels compl\`etement r\'eductibles. C. R. Math. Acad.
  Sci. Paris 341~(6), 349--352.

\bibitem[{Hartmann(2005)}]{Ha05b}
Hartmann, J., 2005. On the inverse problem in differential {G}alois theory. J.
  Reine Angew. Math. 586, 21--44.

\bibitem[{Hrushovski(2002)}]{Hr02a}
Hrushovski, E., 2002. Computing the {G}alois group of a linear differential
  equation. In: Differential {G}alois theory ({B}edlewo, 2001). Vol.~58 of
  Banach Center Publ. Polish Acad. Sci., Warsaw, pp. 97--138.

\bibitem[{Ito(1985)}]{It85a}
Ito, H., 1985. Nonintegrability of {H}\'enon-{H}eiles system and a theorem of
  {Z}iglin. Kodai Math. J. 8~(1), 120--138.
  
\bibitem[{Juan and Ledet(2007)}]{JuLe07a}
Juan, L., Ledet, A., 2007. Equivariant vector fields on non-trivial {${\rm
  SO}\sb n$}-torsors and differential {G}alois theory. J. Algebra 316~(2),
  735--745.

\bibitem[{Kolchin(1999)}]{Ko99a}
Kolchin, E., 1999. Selected works of {E}llis {K}olchin with commentary.
  American Mathematical Society, Providence, RI.

\bibitem[{Kovacic(1969)}]{Ko69a}
Kovacic, J., 1969. The inverse problem in the {G}alois theory of differential
  fields. Ann. of Math. (2) 89, 583--608.

\bibitem[{Kovacic(1971)}]{Ko71a}
Kovacic, J., 1971. On the inverse problem in the {G}alois theory of
  differential fields. {II}. Ann. of Math. (2) 93, 269--284.

\bibitem[{Kovacic(1986)}]{Ko86a}
Kovacic, J.~J., 1986. An algorithm for solving second order linear homogeneous
  differential equations. J. Symbolic Comput. 2~(1), 3--43.

\bibitem[{Kozlov(1996)}]{Ko96a}
Kozlov, V.~V., 1996. Symmetries, topology and resonances in {H}amiltonian
  mechanics. Vol.~31 of Ergebnisse der Mathematik und ihrer Grenzgebiete (3)
  [Results in Mathematics and Related Areas (3)]. Springer-Verlag, Berlin

\bibitem[{Lang(1952)}]{La52a}
Lang, S., 1952. On quasi algebraic closure. Ann. of Math. (2) 55, 373--390.

\bibitem[{Mitschi and Singer(1996)}]{MiSi96a}
Mitschi, C., Singer, M.~F., 1996. Connected linear groups as differential
  {G}alois groups. J. Algebra 184~(1), 333--361.

\bibitem[{Mitschi and Singer(2002)}]{MiSi02a}
Mitschi, C., Singer, M.~F., 2002. Solvable-by-finite groups as differential
  {G}alois groups. Ann. Fac. Sci. Toulouse Math. (6) 11~(3), 403--423.

\bibitem[{Morales~Ruiz(1999)}]{Mo99a}
Morales~Ruiz, J.~J., 1999. Differential {G}alois theory and non-integrability
  of {H}amiltonian systems. Vol. 179 of Progress in Mathematics. Birkh\"auser
  Verlag, Basel.

\bibitem[{Morales-Ruiz et~al.(2007)Morales-Ruiz, Ramis, and Simo}]{MoRaSi07a}
Morales-Ruiz, J.~J., Ramis, J.-P., Simo, C., 2007. Integrability of
  {H}amiltonian systems and differential {G}alois groups of higher variational
  equations. Ann. Sci. \'Ecole Norm. Sup. (4) 40~(6), 845--884.

\bibitem[{Morales-Ruiz et~al.(2005)Morales-Ruiz, Sim{\'o}, and
  Simon}]{MoSiSi05a}
Morales-Ruiz, J.~J., Sim{\'o}, C., Simon, S., 2005. Algebraic proof of the
  non-integrability of {H}ill's problem. Ergodic Theory Dynam. Systems 25~(4),
  1237--1256.

\bibitem[{Singer(1991)}]{Si91a}
Singer, M.~F., 1991. Liouvillian solutions of linear differential equations
  with {L}iouvillian coefficients. J. Symbolic Comput. 11~(3), 251--273.

\bibitem[{Singer(2009)}]{Si09a}
Singer, M.~F., 2009. Introduction to the {G}alois theory of linear differential
  equations. In: Algebraic theory of differential equations. Vol. 357 of London
  Math. Soc. Lecture Note Ser. Cambridge Univ. Press, Cambridge, pp. 1--82.

\bibitem[{Singer and Ulmer(1993)}]{SiUl93a}
Singer, M.~F., Ulmer, F., 1993. Liouvillian and algebraic solutions of second
  and third order linear differential equations. J. Symbolic Comput. 16~(1),
  37--73.

\bibitem[{Ulmer and Weil(1996)}]{UlWe96a}
Ulmer, F., Weil, J.-A., 1996. Note on {K}ovacic's algorithm. J. Symbolic
  Comput. 22~(2), 179--200.

\bibitem[{van~der Put and Singer(2003)}]{PuSi03a}
van~der Put, M., Singer, M.~F., 2003. Galois theory of linear differential
  equations. Vol. 328 of Grundlehren der Mathematischen Wissenschaften
  [Fundamental Principles of Mathematical Sciences]. Springer-Verlag, Berlin.

\bibitem[{Wei and Norman(1963)}]{WeNo63a}
Wei, J., Norman, E., 1963. Lie algebraic solution of linear differential
  equations. J. Mathematical Phys. 4, 575--581.

\bibitem[{Wei and Norman(1964)}]{WeNo64a}
Wei, J., Norman, E., 1964. On global representations of the solutions of linear
  differential equations as a product of exponentials. Proc. Amer. Math. Soc.
  15, 327--334.

\bibitem[{Weil(1995)}]{We95a}
Weil, J.-A., 1995. First integrals and {D}arboux polynomials of homogeneous
  linear differential systems. In: Applied algebra, algebraic algorithms and
  error-correcting codes (Paris, 1995). Vol. 948 of Lecture Notes in Comput.
  Sci. Springer, Berlin, pp. 469--484.

\bibitem[{Yoshida(1986)}]{Yo86a}
Yoshida, H., 1986. Existence of exponentially unstable periodic solutions and
  the nonintegrability of homogeneous {H}amiltonian systems. Phys. D 21~(1),
  163--170. 
  
\bibitem[{Yoshida(1987{\natexlab{a}})}]{Yo87b}
Yoshida, H., 1987{\natexlab{a}}. A criterion for the nonexistence of an
  additional integral in {H}amiltonian systems with a homogeneous potential.
  Phys. D 29~(1-2), 128--142. 

\bibitem[{Yoshida(1987{\natexlab{b}})}]{Yo87a}
Yoshida, H., 1987{\natexlab{b}}. Exponential instability of collision orbit in
  the anisotropic {K}epler problem. Celestial Mech. 40~(1), 51--66.

\bibitem[{Yoshida(1988)}]{Yo88a}
Yoshida, H., 1988. Ziglin analysis for proving nonintegrability of
  {H}amiltonian systems. In: Finite-dimensional integrable nonlinear dynamical
  systems ({J}ohannesburg, 1988). World Sci. Publishing, Singapore, pp. 74--93.

\bibitem[{Ziglin(1982)}]{Zi82a}
Ziglin, S.~L., 1982. Branching of solutions and nonexistence of first integrals
  in hamiltonian mechanics. i. Functional Analysis and Its Applications 16,
  181--189, \url{http://dx.doi.org/10.1007/BF01081586}

\bibitem[{Ziglin(1983)}]{Zi83a}
Ziglin, S.~L., 1983. Branching of solutions and the nonexistence of first
  integrals in hamiltonian mechanics. ii. Functional Analysis and Its
  Applications 17, 6--17,   \url{http://dx.doi.org/10.1007/BF01083174}

\end{thebibliography}



\end{document}